\newtheorem{thm}{Theorem}[section]
\newtheorem{corollary}[thm]{Corollary}
\newtheorem{lemma}[thm]{Lemma}
\theoremstyle{definition}
\theoremstyle{remark}
\newtheorem{remark}[thm]{Remark}
\newtheorem{assumption}[equation]{Assumption}
\numberwithin{equation}{section}
\numberwithin{thm}{section}
\newcommand{\laplacian}{\Delta}
\newcommand{\grad}{\nabla} 
\renewcommand{\eqref}[1]{\textup{{\normalfont(\ref{#1}}\normalfont)}}
\newcommand{\norm}[1]{\left\lVert#1\right\rVert}
\newcommand{\im}{\mbox{Im}\,}
\renewcommand{\epsilon}{\varepsilon}
\newcommand{\massenergy}[1]{E[#1] \norm{ #1}_2^{2 \frac{(1-s_c)}{s_c}}}
\newcommand{\masskineticenergy}[1]{\norm{\grad #1}_2^2 \norm{ #1}_2^{2 \frac{(1-s_c)}{s_c}}}
\newcommand{\masskatoenergy}[1]{E_K[#1] \norm{ #1}_2^{2 \frac{(1-s_c)}{s_c}}}
\newcommand{\masskatokineticenergy}[1]{\norm{\grad \left|#1\right|\,}_2^2 \norm{ #1}_2^{2 \frac{(1-s_c)}{s_c}}}
\subjclass[2020]{35A01, 35B44, 35Q55, 35Q60} 
\keywords{nonlinear Schr{\"{o}}dinger equation, global existence, blow-up, scattering, ground state}
\begin{document}

\title[]{Scattering and Blow-Up in Both Time Directions Above the Ground State for the Focusing Nonlinear Schr{\"{o}}dinger Equation}

\author[I. Miller]{Ian Miller}
\address{Department of Mathematics, University of Colorado Boulder}
\email{ian.d.miller@colorado.edu}

\begin{abstract}
In this article we obtain new scattering and blow-up solutions for intercritical focusing nonlinear Schr{\"{o}}dinger equations (NLS) above the ground state mass-energy threshold. The main focus of this article is the establishment of some solutions with arbitrarily large mass-energy which scatter in both time directions. In particular, large mass-energy which does not arise from the Galilean transformation. We additionally obtain new criteria for blow-up in both time directions, as well as improved sufficient conditions for scattering and blow-up in one time direction.
\end{abstract}

\maketitle

\section{Introduction}\label{sec:introduction}
\par We consider the intercritical, focusing, nonlinear Schr{\"{o}}dinger equation,
\begin{equation}\tag{NLS}\label{eq:nls}
\begin{cases} & i u_t + \laplacian u = - |u|^{p-1} u,\\ & u(x,t) = u_0(x),
\end{cases}
\end{equation}
with unknown $u:\mathbb{R}^N \times \mathbb{R} \rightarrow \mathbb{C}$. The equation has scaling invariance
\[u \mapsto \lambda^{2/(p-1)} u(\lambda x, \lambda^2 t),\]
and we consider $N \ge 2$ \footnote{We establish new control of solutions based on angular momentum. In the case $N=1$, angular momentum is degenerate.} and $p$ such that this scaling is mass supercritical and energy subcritical. That is, defining
\[ s_c = \frac{N}{2} - \frac{2}{p-1},\]
the scaling is critical in $\dot{H}^{s_c}(\mathbb{R}^N)$ with
\[0 < s_c <1.\]
\par We refer to solutions of NLS simply as solutions, and explicitly state when we discuss solutions of different equations.
\par Here, for initial data lying below a certain threshold, global dynamics are very well understood. Above this threshold, it is known that global dynamics are more complex, but apart from this fact, much remains unknown. We will elaborate on the difficulties and overview existing results below, after we set up some necessary notation.
\par In this article, all function spaces will be spatial, over $\mathbb{R}^N$. We will omit the base space from description of our function spaces, for example writing $L^q$ for $L^q(\mathbb{R}^N)$, and we simplify notation for norms, writing $\norm{\cdot}_q$ for $\norm{\cdot}_{L^q}$.
\par We consider $H^1$ initial data, for which there is extensive classical theory (e.g. \cite{SulemSulem99,Cazenave03}). For initial data $u_0 \in H^1$, the equation is locally well-posed. Given a maximal interval of existence, $(T_*,T^*)$, for a solution $u$, $T^*$ is finite if and only if $\lim_{\,t \,\uparrow \,T^{*}} \norm{\grad u(t)}_2^2 = \infty$, in which case we say $u$ blows-up in finite positive time. Similarly, $T_*$ is finite if and only if $\lim_{\,t\, \downarrow\, T_*} \norm{\grad u(t)}_2^2 = \infty$, in which case we say $u$ blows-up in finite negative time.
\par Furthermore, a solution $u$ satisfies several conservation laws on the full interval of existence. These include
\begin{align*}
\mbox{Mass:} \quad &\norm{u(t)}_2^2 = \norm{u_0}_2^2,\\
\mbox{Energy:} \quad &E[u](t) \coloneqq \frac{1}{2} \norm{\grad u(t)}_2^2 - \frac{1}{p+1} \norm{u(t)}_{p+1}^{p+1} = E[u_0],\\
\mbox{Linear Momentum:} \quad &\textbf{P}[u](t) \coloneqq \int\mathcal{P}[u(x,t)]\, dx = \textbf{P}[u_0],\\
\mbox{Angular Momentum:} \quad &\textbf{M}[u](t) \coloneqq \int \mathcal{P}[u(x,t)] \wedge \textbf{x}\, dx = \textbf{M}[u_0],
\end{align*}
where
\[
\mathcal{P}[u] \coloneqq 2 \, \im \left[\overline{u}\, \grad u \right]
\]
is the momentum density.\\
\par In our definition of angular momentum, the wedge of vectors
\[\langle v_1, \ldots , v_N \rangle \wedge \langle w_1, \ldots , w_N \rangle\]
can be thought of as a vector with ${ N \choose 2}$ components, given by
\[ v_j w_k - v_k w_j\]
for $j<k$. In the case of 3 dimensions, the angular momentum is typically defined by the vector
\[
\textbf{M}^*[u](t) \coloneqq - \int \textbf{x} \times \mathcal{P}[u(x,t)]\,dx.
\]
There is an equivalence between these definitions. In particular, the vector $\textbf{M}^*[u](t)$ has the same components as $\textbf{M}[u](t)$ up to a sign, and in 3 dimensions, our results can be formulated with either definition with no change to the statement of the theorems.
\par Conservation of angular momentum in general dimension follows component-wise in the same manner as for three dimensions.

\subsection{The Soliton Solution}
\par As shown in \cite{Weinstein82} (see also \cite{HolmerRoudenko07}), NLS admits a soliton solution
\[u_Q(x,t) = e^{i \lambda t} Q(\alpha x)\]
where $Q:\mathbb{R}^N \rightarrow \mathbb{R}_{\ge 0}$ is the ground state solution for the equation
\[\frac{N(p-1)}{4} \laplacian Q - \left(1 - \frac{(N-2)(p-1)}{4} \right) Q + Q^p = 0,\]
$\alpha = \frac{\sqrt{N(p-1)}}{2}$, and $\lambda = 1- \frac{(N-2)(p-1)}{4}$. The Gagliardo-Nirenberg inequality,
\[
\norm{u}_{p+1}^{p+1} \le C_{GN} \norm{\grad u}_2^{\frac{N(p-1)}{2}} \norm{u}_2^{2 - \frac{(N-2)(p-1)}{2}},
\]
which holds for $u \in H^1$, is optimized by $Q$, and has sharp constant
\[
C_{GN} = \frac{p+1}{2 \norm{Q}_2^{p-1}}.
\]
Furthermore, $Q$ decays exponentially (see \cite{BerestyckiLions83,Tao06}).
By construction, $Q$ satisfies
\[
\norm{\grad Q}_2^2 = \norm{Q}_2^2,
\]
and this together with Pohozaev's identity implies
\[
\norm{\grad Q}_2^2 = \frac{2}{p+1} \norm{Q}_{p+1}^{p+1}.
\]
\par The soliton $u_Q$ also optimizes the Gagliardo-Nirenberg inequality, so that
\[
\norm{u_Q}_{p+1}^{p+1} = C_{GN} \norm{\grad u_Q}_2^{\frac{N(p-1)}{2}} \norm{u_Q}_2^{2 - \frac{(N-2)(p-1)}{2}},
\]
and by computation using the identities of $Q$, we obtain identities
\begin{align}
\label{eq:soliton-identity1} E[u_Q] &= \frac{N(p-1)-4}{2N(p-1)} \norm{\grad u_Q}_2^2,\\
\label{eq:soliton-identity2} C_{GN} &= \frac{2(p+1)}{N(p-1)\left(\norm{\grad u_Q}_2^{s_c}\norm{u_Q}_2^{1-s_c}\right)^{p-1}}.
\end{align}

\subsection{Variance and Other Important Quantities}
\par At a given time $t$, solutions have spatial variance
\[V[u](t) \coloneqq \int |x|^2 |u(x,t)|^2\, dx.\]
If $V[u_0]$ is finite, then $V[u](t)$ is finite for the full interval of existence and satisfies the well-known identities \cite{SulemSulem99}
\begin{align*}
V_t[u](t) &= 2 \int \textbf{x} \cdot \ \mathcal{P}\left[u(x,t)\right] \,dx,\\
V_{tt}[u](t) &= 8 \norm{\grad u(t)}_2^2 - \frac{4N(p-1)}{p+1} \norm{u(t)}_{p+1}^{p+1},
\end{align*}
where the identity for $V_{tt}[u](t)$ is the classical virial identity.
\par By either direct computation using properties of $u_Q$ stated above, or else the stationary properties of the soliton,
\begin{equation}\label{eq:variance-value}
8 \norm{\grad u_Q}_2^2 - \frac{4N(p-1)}{p+1} \norm{u_Q}_{p+1}^{p+1}=0.
\end{equation}
\par In what follows, given initial data $u_0 \in H^1$, expressions such as $V_t[u_0]$ will refer to $V_t[u](0)$ where $u$ is the solution with initial data $u_0$.

\par The quantity $\norm{\grad u (t)}_2^2$ will be referred to as kinetic energy. We also consider the quantity $\norm{\grad |u|(t)}_2^2$, which will be referred to as Kato kinetic energy. An identity first noted in \cite{CzubakMillerRoudenko24} and presented in Lemma \ref{lem:remainder} demonstrates an aspect of the connection between Kato kinetic energy and momentum. This connection has been exploited to obtain blow-up results for electromagnetic nonlinear Schr{\"{o}}dinger equations \cite{CzubakMillerRoudenko24}, and will be used in our main result. Also significant will be the Kato energy of a solution, which is defined by
\[
E_K[u](t) \coloneqq \frac{1}{2} \norm{\grad \left|u(t)\right| \,}_2^2 - \frac{1}{p+1} \norm{u(t)}_{p+1}^{p+1},
\]
and is not a conserved quantity.
\par Kato kinetic energy and Kato energy are introduced in \cite{CzubakMillerRoudenko24} for the electromagnetic nonlinear Schr{\"{o}}dinger equation, and we do not know of its presence in prior literature for NLS. However, as we find in this article, Kato kinetic energy reveals important structure regarding the mass profile, $|u_0|$, of initial data, and we will contextualize many previous results in terms of Kato energy and Kato kinetic energy. For existing results, this contextualization is a natural alternative to the originally presented context, and does not modify the results, as is discussed further in Remark \ref{rem:grad-comp}.
\subsection{At and Below the Ground State}
\par Questions of global existence and blow-up at and below the ground state have been extensively studied (e.g. \cite{GinibreVelo79-1,CazenaveWeissler92,HolmerRoudenko07,DuyckaertsHolmerRoudenko08,HolmerRoudenko08,HolmerRoudenko10,Guevara14,GuevaraCarreon12,CamposFarahRoudenko22,DuyckaertsRoudenko10}). We review certain results which are relevant to our work and are established, in their current generality, in \cite{Guevara14} and \cite{CamposFarahRoudenko22}.
\par With respect to the scale invariant quantities
\begin{align*}
\mbox{Mass-Energy}:& \quad \massenergy{u(t)},\\
\mbox{Mass-Kinetic Energy}:& \quad \masskineticenergy{u(t)},
\end{align*}
the soliton $u_Q$ provides a threshold for global existence and blow-up. We say that initial data $u_0$ is below the ground state if $u_0$ has smaller mass-energy than the soliton $u_Q$, so
\[E[u_0]\norm{u_0}_2^{2\frac{(1-s_c)}{s_c}}  < E[u_Q] \norm{u_Q}_2^{2\frac{(1-s_c)}{s_c}}.\]
\par The study of initial data at and below the ground state has yielded very satisfactory results. Precursors to global results for the intercritical equation appear in works such as \cite{KenigMerle06,Weinstein82} which study mass and energy critical equations. Holmer and Roudenko began the investigation of the intercritical equation in \cite{HolmerRoudenko07}. The full resolution for the case of finite variance,\footnote{The case of infinite variance, while having well developed theory, is to some extent not fully resolved. This is due to remaining questions regarding an a priori infinite time blow-up scenario (see \cite{HolmerRoudenko10}).} was achieved in following theorems of Guevara \cite{Guevara14}, and Campos, Farah, and Roudenko \cite{CamposFarahRoudenko22}.
\begin{thm}[Below the Ground State \cite{Guevara14}]\label{thm:intro-ref-below}
Assume, $0<s_c<1$, and $u_0 \in H^1$ with $|x|u_0 \in L^2$ such that
\[
\massenergy{u_0} < \massenergy{u_Q}.
\]
\begin{itemize}
\item If
\[
\masskineticenergy{u_0}<\masskineticenergy{u_Q},
\]
then the solution $u$ exists globally and scatters in both time directions.
\item If
\[
\masskineticenergy{u_0}>\masskineticenergy{u_Q},
\]
then the solution $u$ blows-up in finite time in both time directions.
\end{itemize}
\end{thm}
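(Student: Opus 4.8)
The plan is to prove this via the now-classical concentration-compactness/rigidity machinery of Kenig--Merle, adapted to the intercritical setting, combined with a sharp virial analysis. The two regimes are governed by the sign of a coercive quantity, so the first step is to establish that the hypotheses are preserved along the flow. Define the scale-invariant mass-energy $ME[u] = \massenergy{u}$ and mass-kinetic energy $MK[u(t)] = \masskineticenergy{u(t)}$. Using the conservation of mass and energy together with the Gagliardo--Nirenberg inequality and the soliton identities \eqref{eq:soliton-identity1}--\eqref{eq:soliton-identity2}, one shows that the function $t \mapsto MK[u(t)]$ cannot cross the threshold value $MK[u_Q]$: the energy constraint $ME[u_0] < ME[u_Q]$ forces the continuous quantity $MK[u(t)]$ to stay strictly on whichever side of $MK[u_Q]$ it starts on. This trapping argument is the standard opening move and reduces everything to analyzing the two trapped cases separately.

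For the scattering (subthreshold kinetic energy) case, I would run the concentration-compactness argument. First, one establishes a small-data scattering result via Strichartz estimates, giving a threshold for the scattering norm below which all solutions scatter. Then one argues by contradiction: if scattering fails below the mass-energy threshold, there is a critical mass-energy level at which a nonscattering solution exists; a profile decomposition applied to a sequence approaching this critical level, together with a perturbation/stability lemma, extracts a critical element whose trajectory is precompact in $H^1$ modulo the symmetries of the equation. The rigidity step then uses the virial identity $V_{tt}[u](t) = 8\norm{\grad u(t)}_2^2 - \frac{4N(p-1)}{p+1}\norm{u(t)}_{p+1}^{p+1}$: the precompactness yields a uniform localization that, combined with the strict subthreshold coercivity estimate, forces $V_{tt}$ to be strictly negative and bounded away from zero, contradicting the boundedness of $V_t$ that precompactness provides. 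This rules out the critical element and establishes scattering.

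For the blow-up (superthreshold kinetic energy) case, the argument is shorter and rests directly on the virial identity together with finite variance. The coercivity estimate in the superthreshold regime shows that there exists $\delta > 0$ with $V_{tt}[u](t) \le -\delta < 0$ for all $t$ in the interval of existence; integrating twice shows $V[u](t)$ becomes negative in finite time in both directions, which is impossible since $V[u](t) \ge 0$. Hence the maximal interval of existence must be finite on both ends, and by the blow-up criterion $\lim \norm{\grad u(t)}_2^2 = \infty$, the solution blows up in finite time in both time directions. The finite-variance hypothesis $|x|u_0 \in L^2$ is precisely what makes this clean virial obstruction available.

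The main obstacle is the scattering half, specifically the construction and rigidity of the critical element. The profile decomposition must respect the full symmetry group, including the Galilean boosts, and the stability/perturbation theory in the Strichartz spaces for the intercritical nonlinearity requires care near the endpoints of the admissible range $0 < s_c < 1$. Translating the spatial localization furnished by precompactness into the quantitative gap $V_{tt} \le -\delta$ needed for rigidity is the delicate point, since one must simultaneously control the kinetic and potential energy contributions uniformly in time; the blow-up half, by contrast, is essentially immediate once the coercivity lemma is in hand.
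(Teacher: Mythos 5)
This theorem is not proved in the paper at all: it is stated as background and cited directly from Guevara \cite{Guevara14} (building on Holmer--Roudenko and Duyckaerts--Holmer--Roudenko), so there is no internal proof to compare against; the relevant comparison is with the proof in those cited works. Your sketch is, in outline, exactly the route that literature takes: trapping of the mass-kinetic energy via Gagliardo--Nirenberg and energy conservation, Kenig--Merle concentration-compactness plus rigidity for the subthreshold scattering half, and the virial convexity (Glassey) argument for the superthreshold blow-up half. Two concrete issues, though. First, your rigidity step has the sign backwards: in the subthreshold regime the coercivity estimate gives $V_{tt}[u](t) \ge c \norm{\grad u(t)}_2^2 > 0$, i.e.\ bounded \emph{below} by a positive constant; a strictly negative $V_{tt}$ bounded away from zero is the superthreshold situation and is what drives your (correct) blow-up half. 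The contradiction in rigidity is that the virial quantity has second derivative uniformly positive while its first derivative stays bounded, which is impossible on a long time interval. Second, the critical element extracted from the profile decomposition need not inherit the finite-variance hypothesis $|x|u_0 \in L^2$, so the rigidity argument cannot literally use $V$, $V_t$, $V_{tt}$; one must run it with a truncated quantity $z_R(t) = \int \phi_R\, |u|^2\, dx$ and use the uniform spatial localization furnished by precompactness to control the cutoff error terms. Your phrase ``uniform localization'' gestures at this, but as written the untruncated virial identities you invoke may not even be defined for the critical element. With those two repairs your proposal matches the standard proof of the cited theorem.
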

\begin{remark}
\par The energy assumption implies that $u_0$ can not satisfy
\[
\masskineticenergy{u_0} = \masskineticenergy{u_Q}.
\]
\end{remark}
\par The above theorem shows that for finite variance initial data below the ground state, there is a true dichotomy with global and blow-up solutions classified by mass-kinetic energy.
\begin{thm}[`Threshold Solutions': At the Ground State \cite{CamposFarahRoudenko22}]\label{thm:intro-ref-at} Assume $0<s_c<1$. The theory of solutions at the ground state can be presented in two parts.\\
\par \textbf{Part 1:}
\par There exist radial solutions $u_Q^+$ and $u_Q^-$ to NLS in $H^1$ satisfying the following properties
\begin{itemize}
\item $\norm{u_Q^-(t)}_2^2 = \norm{u_Q^+(t)}_2^2 = \norm{u_Q}_2^2$, \quad $E[u_Q^-](t) = E[u_Q^+](t) = E[u_Q]$,
\item $\norm{\grad u_Q^-(0)}_2^2 < \norm{\grad u_Q}_2^2 < \norm{ \grad u_Q^+(0)}_2^2 $,
\item In positive time, $u_Q^-$ converges to the soliton $u_Q$ in the sense that there are $C,\alpha>0$ such that
\[
\norm{u_Q^-(t) - u_Q}_{H^1} \le C e^{-\alpha t},
\]
and in negative time $u_Q^-$ exists globally and scatters.
\item In positive time, $u_Q^+$ converges to the soliton $u_Q$ in the sense that there are $C,\alpha>0$ such that
\[
\norm{u_Q^+(t) - u_Q}_{H^1} \le C e^{-\alpha t},
\]
and $u_Q^+$ blows-up in finite negative time.
\end{itemize}
\par \textbf{Part 2:}
\par Suppose $u_0 \in H^1$ with $|x|u_0 \in L^2$ satisfies
\[
\massenergy{u_0} = \massenergy{u_Q}.
\]
\begin{itemize}
\item If
\[\masskineticenergy{u_0}<\masskineticenergy{u_Q},
\]
then either
\begin{itemize}
\item the solution satisfies $u=u_Q^-$ up to symmetry, or
\item the solution exists globally and scatters in both time directions.
\end{itemize}
\item If
\[\masskineticenergy{u_0}=\masskineticenergy{u_Q},
\]
then the solution satisfies $u=u_Q$ up to symmetries.
\item If
\[\masskineticenergy{u_0}>\masskineticenergy{u_Q},
\]
then either
\begin{itemize}
\item the solution satisfies $u=u_Q^+$ up to symmetry, or
\item the solution blows-up in finite time in both time directions.
\end{itemize}
\end{itemize}
\end{thm}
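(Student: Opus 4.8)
The plan is to adapt the threshold-analysis strategy developed for the energy-critical equation to the intercritical regime: linearize \eqref{eq:nls} around the soliton, extract a unique unstable mode from the spectrum of the linearized operator, construct the special solutions $u_Q^{\pm}$ by summing a convergent exponential series built from that mode, and then close the classification in Part 2 by a modulation-plus-rigidity argument combined with the subthreshold dichotomy of Theorem \ref{thm:intro-ref-below}. Throughout, the key conserved quantities (mass, energy) will be used to pin the invariants of the constructed solutions to those of $u_Q$, and the scaling structure will be normalized so that the soliton becomes a stationary solution.

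For Part 1, I would rescale so that $Q$ is a stationary profile, write $u = e^{i\lambda t}(Q + w)$, and linearize to obtain $\partial_t w = \mathcal{L} w$, where the matrix operator $\mathcal{L}$ acts on $(\im w, \text{real part of } w)$ through the pair of Schr{\"{o}}dinger operators $L_+ = -\laplacian + 1 - pQ^{p-1}$ and $L_- = -\laplacian + 1 - Q^{p-1}$. The essential spectral input, valid because the soliton is unstable for $s_c>0$, is that $\mathcal{L}$ has exactly one positive eigenvalue $e_0>0$ with eigenfunction $\mathcal{Y}_+$, a matching $-e_0$ with eigenfunction $\mathcal{Y}_-$, and that the remaining spectrum is purely imaginary with a coercivity (spectral gap) estimate on the orthogonal complement of the symmetry-induced kernel (generated by phase, translations, and scaling). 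Granting this, I would build $u_Q^{\pm}$ as the sum of a formal series $Q + \sum_{k\ge 1}(\pm a)^k e^{-k e_0 t}\phi_k$ with $\phi_1 = \mathcal{Y}_+$, solving the resulting elliptic hierarchy for $\phi_k$ and proving convergence for $t$ large via the spectral gap; a fixed-point argument then upgrades this to an actual solution on a half-line $[T_0,\infty)$, which extends globally forward. Mass and energy are conserved, and since the flow converges to $u_Q$ as $t\to\infty$ they must equal those of $u_Q$; the exponential rate $\norm{u_Q^{\pm}(t)-u_Q}_{H^1}\le Ce^{-\alpha t}$ is read off the leading term, and the sign of $a$ selects the side of the kinetic-energy threshold, yielding $\norm{\grad u_Q^-(0)}_2^2<\norm{\grad u_Q}_2^2<\norm{\grad u_Q^+(0)}_2^2$. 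The backward behavior is then obtained alongside the classification below: a threshold virial/variance argument forces the finite-time blow-up of $u_Q^{+}$ once it leaves a neighborhood of the soliton manifold, while the scattering analysis handles $u_Q^{-}$.

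For Part 2, I would run the concentration-compactness/rigidity scheme at the threshold. Given a threshold solution with $\masskineticenergy{u_0}<\masskineticenergy{u_Q}$ that fails to scatter in both time directions, I would show its flow is precompact modulo the symmetry group, so that $u(t)$ approaches the soliton manifold along a sequence of times. Modulation theory then gives $u(t)=e^{i\theta(t)}\lambda(t)^{2/(p-1)}(Q+\epsilon(t))(\lambda(t)(\,\cdot - x(t)))$ with $\epsilon(t)$ small and orthogonal to the kernel directions, and the unstable coordinates $\langle \epsilon,\mathcal{Y}_{\pm}\rangle$ obey an ODE forcing $\epsilon\to 0$ exponentially. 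A rigidity lemma—combining coercivity of the linearized energy on the orthogonal complement with a monotonicity identity—then shows $\epsilon$ must lie exactly on the unstable manifold, so that $u=u_Q^-$ up to symmetries. The kinetic energy above threshold case is symmetric and produces $u_Q^+$, and the equality case is the classical fact that $\masskineticenergy{u_0}=\masskineticenergy{u_Q}$ together with the threshold mass-energy forces equality in the Gagliardo-Nirenberg inequality, hence $u_0=Q$ up to symmetry and $u=u_Q$.

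The main obstacle is the rigidity step in Part 2: ruling out threshold solutions that neither scatter nor blow up as in the subthreshold dichotomy and yet do not coincide with $u_Q^{\pm}$. This demands both a precise spectral description of $\mathcal{L}$—including coercivity of the linearized-energy quadratic form after projecting off the finite-dimensional kernel and the unstable pair—and the propagation of that coercivity along the nonlinear flow through a monotonicity formula strong enough to defeat the non-coercive directions. Controlling the modulation parameters, in particular preventing the scaling parameter $\lambda(t)$ from degenerating, and matching the flow to the uniquely constructed solution on the unstable manifold, is where the bulk of the technical work lies.
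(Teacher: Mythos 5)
This theorem is not proved in the paper at all: it is quoted verbatim as background, with the proof deferred entirely to the cited reference \cite{CamposFarahRoudenko22}, so there is no in-paper argument to compare against. Your sketch is a faithful high-level reconstruction of the strategy actually used in that reference and its predecessors (Duyckaerts--Merle for the energy-critical case, Duyckaerts--Roudenko for the 3d cubic case): construction of $u_Q^{\pm}$ from the unstable eigenmode of the linearized operator via an exponential series and fixed point, followed by concentration-compactness, modulation, and rigidity for the classification in Part 2.
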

\par This theorem shows that once the threshold is reached, the dichotomy seen under the ground state begins to break down, where in addition to the soliton, there is a single special solution as an exception to scattering and a single special solution as an exception to blow-up. This leaves five possibilities for behavior of solutions at the ground state. Later, we will see that above the ground state the classification property initial data below the ground state enjoys breaks down even further.

\subsection{Above the Ground State}
\par Note that at and below the ground state, for the case of finite variance, the theorems of Guevara \cite{Guevara14}, and of Campos, Fara, and Roudenko \cite{CamposFarahRoudenko22}, completed the classification of initial data based on general global behavior of solutions.
\par Above the ground state, there is considerably less known. Here we will review relevant results.\footnote{Some of these results are proved for specific NLS equations, commonly the 3 dimensional cubic equation. At times these restrictions seem to be for convenience, with arguments generalizing to more general intercritical equations, though there are times when these restrictions are important for arguments. Here we wish to review what has been done for any case of intercritical equation, so for discussion of background results we will avoid mention of such restrictions.}
\par We begin with a remark concerning linear momentum. Given $v \in \mathbb{R}^N$, the Galilean transformation
\[
u(x,t) \mapsto u_v(x,t) = e^{ix \cdot v}e^{-it|v|^2} u(x-2vt,t)
\]
allows us to transform between inertial reference frames, with $u_v$ being a solution if $u$ is, and with
\[
\textbf{P}[u_v] = \textbf{P}[u] + 2v.
\]
\par For nontrivial $u$, the Galilean transformation preserves $\norm{u(t)}_2^2$, and by choosing $v$ with large enough magnitude, $E[u_v](t)$ can be taken to be arbitrarily large. This allows for classification of global behavior for certain solutions arbitrarily far above the ground state by taking initial data below the ground state and performing an appropriate Galilean transformation. In this article, we only consider results which are nontrivial under the assumption on linear momentum $\textbf{P}[u_0] =0$.
\par As stated above, there are existing results which indicate that the problem above the ground state is more delicate than below the ground state. Nakanishi and Schlag \cite{NakanishiSchlag12} identify at least nine different possibilities for global dynamics, each attained by infinitely many initial data, including scattering in one time direction and blow-up in the other (see also \cite{CazenaveWeissler92,DuyckaertsRoudenko15}, for explicit examples).
\par It is natural to separately consider results for initial data which is close to the ground state, and results where initial data is allowed to be far from the ground state.
\par There are two results we know of imposing some restriction on distance of initial data from the ground state. Nakanishi and Schlag \cite{NakanishiSchlag12} obtain results for initial data which is close to the ground state in mass-energy. Beceanu \cite{Beceanu08} obtains obtains results for initial data which is close to the ground state in norm.
\par We now review what is known for initial data permitted to be arbitrarily far from the ground state. Given initial data $u_0 \in H^1$ with $|x|u_0 \in L^2$, a rewriting of the standard virial identity blow-up argument for positive energy implies that if
\begin{equation}\label{eq:negative-virial-ext}
 \left[ E[u_0] - \frac{|V_t[u_0]|^2}{32 V[u_0]}\right] \norm{u_0}_2^{2\frac{(1-s_c)}{s_c}} \le 0,
\end{equation}
and
\[
V_t[u_0] <0,
\]
then the solution $u$ with initial data $u_0$ blows-up in finite positive time (see \cite[Theorem 4.2]{Weinstein82},\cite{SulemSulem99}). Duyckaerts and Roudenko \cite{DuyckaertsRoudenko15} improve on this by establishing results after raising the right hand side of the inequality \eqref{eq:negative-virial-ext} from $0$ to the ground state. These results include the establishment of both solutions which are global in positive time, and solutions which blow-up in finite positive time. We now recount this result in the intercritical case.
\begin{thm}[\cite{DuyckaertsRoudenko15}]\label{thm:intro-ref-DR}
Suppose $0<s_c<1$, and consider $u_0 \in H^1$ with $|x|u_0 \in L^2$. Furthermore, assume
\begin{equation}\label{eq:intro-energy-above}
 \left[ E[u_0] - \frac{|V_t[u_0]|^2}{32 V[u_0]}\right] \norm{u_0}_2^{2\frac{(1-s_c)}{s_c}} \le  E[u_Q] \norm{u_Q}_2^{2\frac{(1-s_c)}{s_c}}.
\end{equation}
\begin{itemize}
\item (Existence and Scattering) If
\begin{equation}\label{eq:intro-kinetic-ass-below}
\masskatokineticenergy{u_0} < \masskineticenergy{u_Q},
\end{equation}
and
\[
V_t(0) \ge 0,
\]
then the solution $u$ with initial data $u_0$ exists globally forward in time and scatters forward in time.
\item (Blow-Up) If
\begin{equation}\label{eq:intro-kinetic-ass-above}
\masskatokineticenergy{u_0} > \masskineticenergy{u_Q},
\end{equation}
and
\[
V_t(0) \le 0,
\]
then the solution $u$ with initial data $u_0$ blows-up in finite positive time.
\end{itemize}
\end{thm}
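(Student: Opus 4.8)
The plan is to read the hypothesis \eqref{eq:intro-energy-above} as a \emph{Kato} mass-energy threshold condition, and then to run the below-threshold machinery with the mass profile $|u|$ in place of $u$, the essential new point being the propagation of this threshold condition in time. On the interval of existence write $u=|u|e^{i\theta}$. Lemma \ref{lem:remainder} supplies the decomposition
\[
\norm{\grad u(t)}_2^2 = \norm{\grad |u(t)|}_2^2 + R(t), \quad R(t)\coloneqq \int |u|^2|\grad\theta|^2\,dx\ge 0,
\]
together with $V_t[u](t)=4\int \textbf{x}\cdot |u|^2\grad\theta\,dx$; Cauchy-Schwarz then gives the refined bound $|V_t[u](t)|^2\le 16\,V[u](t)\,R(t)$. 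Setting $\mathcal{G}(t)\coloneqq E[u_0]-\frac{|V_t[u](t)|^2}{32\,V[u](t)}$ this yields $E_K[u(t)]=E[u_0]-\tfrac12 R(t)\le\mathcal{G}(t)$, so at $t=0$ the hypothesis becomes $\masskatoenergy{u_0}\le \mathcal{G}(0)\,\norm{u_0}_2^{2\frac{(1-s_c)}{s_c}}\le\massenergy{u_Q}$: the data lies below the ground state in Kato mass-energy. Applying Gagliardo-Nirenberg to $|u|$ (legitimate since $\norm{|u|}_{p+1}=\norm{u}_{p+1}$ and $\norm{\grad|u|}_2\le\norm{\grad u}_2$) together with \eqref{eq:soliton-identity1}-\eqref{eq:soliton-identity2} gives the usual trapping dichotomy for the profile $|u|$: whenever $\masskatoenergy{u(t)}\le\massenergy{u_Q}$ the sign of $\masskatokineticenergy{u(t)}-\masskineticenergy{u_Q}$ is locally constant, and on the sub-threshold side one has coercivity of $K_K[u]\coloneqq \norm{\grad|u|}_2^2-\frac{N(p-1)}{2(p+1)}\norm{u}_{p+1}^{p+1}$.

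For scattering I would run a continuity argument on the maximal forward interval with \eqref{eq:intro-kinetic-ass-below} as the open condition. On any interval where $\masskatoenergy{u}\le\massenergy{u_Q}$ and $\masskatokineticenergy{u}<\masskineticenergy{u_Q}$ the dichotomy gives $K_K[u]\ge0$, hence $V_{tt}=8\bigl(K_K[u]+R\bigr)\ge0$; with $V_t(0)\ge0$ this forces $V_t(t)\ge0$. Differentiating $\mathcal{G}$ and using $2VV_{tt}-V_t^2\ge 16\,V K_K[u]\ge0$ (again the refined Cauchy-Schwarz) gives $\mathcal{G}'(t)=-\frac{V_t}{32V^2}\bigl(2VV_{tt}-V_t^2\bigr)\le0$. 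Thus $\mathcal{G}$ is non-increasing, the bound $\masskatoenergy{u(t)}\le\mathcal{G}(t)\,\norm{u_0}_2^{2\frac{(1-s_c)}{s_c}}\le\massenergy{u_Q}$ persists, and the dichotomy keeps $\masskatokineticenergy{u}$ below threshold for all $t\ge0$. A uniform bound on $\norm{\grad|u|}_2$ then bounds $\norm{u}_{p+1}$ and hence $E_K[u]$, so $R=2\bigl(E[u_0]-E_K[u]\bigr)$ and therefore $\norm{\grad u}_2$ remain bounded and the solution is global forward; scattering follows from the maintained coercivity by the virial-Morawetz/concentration-compactness argument underlying Theorem \ref{thm:intro-ref-below}.

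For blow-up I would de-boost. For each $t$ set $v_t\coloneqq e^{-i\frac{V_t[u](t)}{8V[u](t)}|\textbf{x}|^2}u(\cdot,t)$; a direct computation gives $\norm{v_t}_2=\norm{u_0}_2$, $|v_t|=|u(t)|$, $\norm{\grad v_t}_2^2=\norm{\grad u(t)}_2^2-\frac{|V_t|^2}{16V}$, $E[v_t]=\mathcal{G}(t)$, and $V_{tt}[u]=8K[v_t]+\frac{|V_t|^2}{2V}$, where $K[\phi]\coloneqq\norm{\grad\phi}_2^2-\frac{N(p-1)}{2(p+1)}\norm{\phi}_{p+1}^{p+1}$. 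Since $\norm{\grad v_t}_2\ge\norm{\grad|u|}_2$, the hypotheses together with \eqref{eq:intro-kinetic-ass-above} place $v_0$ below the ground state in mass-energy and strictly above in mass-kinetic energy; the dichotomy keeps $v_t$ in this blow-up region, giving uniform coercivity $K[v_t]\le-\delta<0$. Consequently
\[
\frac{d^2}{dt^2}V^{1/2}=\frac{1}{2\sqrt V}\Bigl(V_{tt}-\frac{|V_t|^2}{2V}\Bigr)=\frac{4K[v_t]}{\sqrt V}\le-\frac{4\delta}{\sqrt V}<0,
\]
so $V^{1/2}$ is concave; with $V_t(0)\le0$ its derivative is non-positive and strictly decreasing, forcing $V\to0$ in finite positive time, and by the uncertainty principle $\norm{u_0}_2^2\le\frac{2}{N}V^{1/2}\norm{\grad u}_2$ this yields $\norm{\grad u}_2\to\infty$, i.e. finite-time blow-up. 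The same concavity makes $|V_t|^2/V$ non-decreasing, so $\mathcal{G}$ is again non-increasing, closing the bootstrap that keeps $v_t$ in the region.

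The genuinely new difficulty, absent below the ground state, is that $E_K$ is not conserved, so the trapping region is not manifestly invariant; the crux is maintaining $\masskatoenergy{u(t)}\le\massenergy{u_Q}$ in time. Both cases resolve this through one mechanism, the sign of $\mathcal{G}'$ (equivalently the concavity of $V^{1/2}$), which rests entirely on the refined inequality $|V_t|^2\le16VR$ of Lemma \ref{lem:remainder}, i.e. on the Kato-momentum connection. I expect this propagation, together with the adaptation of the below-threshold scattering argument so that its coercivity is supplied by the Kato bound rather than by an energy bound, to be where the real work lies.
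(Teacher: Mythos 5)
Your scattering argument is essentially the paper's own. The paper never reproves the Duyckaerts--Roudenko theorem directly; instead its Theorem \ref{thm:main-unidirectional} generalizes it (see Remark \ref{rem:comp-1}), and the proof given there runs exactly your scheme: read \eqref{eq:intro-energy-above}, via Lemma \ref{lem:remainder} and Cauchy--Schwarz (this is Lemma \ref{lem:momentum-inequality} with the wedge term dropped), as placing the \emph{Kato} mass-energy below the ground state; propagate this by monotonicity of $E[u_0]-\frac{|V_t|^2}{32V}$ --- your formula $\mathcal{G}'=-\frac{V_t}{32V^2}\bigl(2VV_{tt}-V_t^2\bigr)$ is the paper's computation \eqref{eq:varcomp1} specialized to zero angular momentum --- and close by a bootstrap plus the Duyckaerts--Roudenko scattering criterion (the paper's Corollary \ref{cor:scattering}). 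Your blow-up argument, by contrast, takes a genuinely different route: the quadratic-phase de-boost $v_t=e^{-i\frac{V_t}{8V}|x|^2}u(t)$ and trapping of $v_t$ in the below-threshold blow-up region is the mechanism of the original Duyckaerts--Roudenko paper, precisely the approach this paper contrasts itself with (reduction to ``functions defined by a solution'' versus direct control). The two are equivalent in substance, since your identity $V_{tt}=8K[v_t]+\frac{|V_t|^2}{2V}$ makes $K[v_t]<0$ the same statement as the paper's monotonicity $\partial_t\bigl[V_t/\sqrt{V}\bigr]<0$; your version gives a cleaner conceptual reduction to the sub-threshold picture, while the paper's direct version extends uniformly to its angular-momentum refinements, where no single phase can remove the momentum contribution.

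The genuine gap is the borderline case of equality in \eqref{eq:intro-energy-above}, which the hypothesis permits. Your monotonicity is only non-strict ($\mathcal{G}$ non-increasing), so you conclude $\sup_t \masskatoenergy{u(t)} \le \massenergy{u_Q}$; but the scattering criterion you invoke requires the \emph{strict} uniform bound $\sup_t \masskatoenergy{u(t)} < \massenergy{u_Q}$ (as in Corollary \ref{cor:scattering}), and your trapping claims --- ``the sign of $\masskatokineticenergy{u(t)}-\masskineticenergy{u_Q}$ is locally constant'' and ``uniform coercivity $K[v_t]\le-\delta$'' --- fail exactly at equality, where the forbidden region between the two branches of $\{P\le \text{mass-energy}\}$ collapses to a point and no uniform $\delta$ exists. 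This is not a vacuous corner: equality with $V_t(0)=0$ puts the data at the ground-state threshold, where the exceptional solutions $u_Q^{\pm}$ of Theorem \ref{thm:intro-ref-at} live, so some argument excluding threshold behavior is unavoidable. The paper closes this with an explicit strictness-upgrade step at the start of each bootstrap: strict inequality in \eqref{eq:intro-kinetic-ass-below} (resp. \eqref{eq:intro-kinetic-ass-above}) gives strict coercivity, hence $V_{tt}>0$ (resp. $V_{tt}<0$) at $t=0$, so $V_t$ acquires a strict sign on $(0,\epsilon)$, the monotonicity becomes strict there, and all conditions hold strictly for $t>0$, after which the bootstrap runs on strict inequalities only. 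Your framework contains every ingredient needed for this fix, but as written the proposal does not carry it out, and without it both halves of your argument are incomplete precisely on the boundary case the theorem includes.
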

\begin{remark}\label{rem:grad-comp}
This result is formulated somewhat differently, but equivalently, in its original paper. In conditions \eqref{eq:intro-kinetic-ass-below} and \eqref{eq:intro-kinetic-ass-above}, the original paper considers the quantity $\norm{\cdot}_{p+1}^{p+1}$ where we consider Kato kinetic energy. By a computation (see \cite[`Double Cut Lemma']{CzubakMillerRoudenko24} as well as the proofs of Corollary \ref{cor:scattering} or Theorem \ref{thm:main-unidirectional} in this article), if $w \in H^1$ with $\masskatoenergy{w} \le \massenergy{u_Q}$, then
\begin{equation}\label{eq:intro-kinetic-below}
\norm{w}_{p+1}^{p+1} \norm{w}_2^{2 \frac{(1-s_c)}{s_c}} < \norm{u_Q}_{p+1}^{p+1} \norm{u_Q}_2^{2 \frac{(1-s_c)}{s_c}}, \quad \mbox{and} \quad \masskatokineticenergy{w} < \masskineticenergy{u_Q}
\end{equation}
are equivalent, and
\begin{equation}\label{eq:intro-kinetic-above}
\norm{w}_{p+1}^{p+1} \norm{w}_2^{2 \frac{(1-s_c)}{s_c}} > \norm{u_Q}_{p+1}^{p+1} \norm{u_Q}_2^{2 \frac{(1-s_c)}{s_c}}, \mbox{ and } \masskatokineticenergy{w} > \masskineticenergy{u_Q}
\end{equation}\\
are equivalent. In fact, if $\massenergy{w} \le \massenergy{u_Q}$, then all of\\
\begin{align*}
\norm{w}_{p+1}^{p+1} \norm{w}_2^{2 \frac{(1-s_c)}{s_c}} &< \norm{u_Q}_{p+1}^{p+1} \norm{u_Q}_2^{2 \frac{(1-s_c)}{s_c}},\\
\masskatokineticenergy{w} &< \masskineticenergy{u_Q},\\
\masskineticenergy{w} &< \masskineticenergy{u_Q},
\end{align*}
are equivalent, and likewise for the reverse inequalities.
\par The energy assumption \eqref{eq:intro-energy-above} implies, using Lemma \ref{lem:momentum-inequality} in this article, that $\masskatoenergy{u_0} \le \massenergy{u_Q}$, and hence the equivalences \eqref{eq:intro-kinetic-below} and \eqref{eq:intro-kinetic-above} apply. This also relates to the connection between below ground state and above ground state results as noted in \cite{DuyckaertsRoudenko15}.
\end{remark}
\begin{remark}
The improvement that Theorem \ref{thm:intro-ref-DR} makes on the positive energy virial identity result, improving \eqref{eq:negative-virial-ext} to \eqref{eq:intro-energy-above}, has a parallel structure to the improvement that Theorems \ref{thm:intro-ref-below} and \ref{thm:intro-ref-at} jointly make by raising the negative energy blow-up condition
\[
\massenergy{u_0}<0
\]
to
\[
\massenergy{u_0} \le \massenergy{u_Q}.
\]
\end{remark}
\par Theorem \ref{thm:intro-ref-DR} provides the most important context for the main global existence and blow-up results of this article. For our result in one time direction, Theorem \ref{thm:main-unidirectional}, we extend Theorem \ref{thm:intro-ref-DR} by obtaining results after further weakening \eqref{eq:intro-energy-above} to
\begin{equation}\notag
 \left[ E[u_0] - \frac{|V_t[u_0]|^2 + |2 \textbf{M}[u_0]|^2}{32 V[u_0]}\right] \norm{u_0}_2^{2\frac{(1-s_c)}{s_c}} \le  E[u_Q] \norm{u_Q}_2^{2\frac{(1-s_c)}{s_c}}.
\end{equation}
\par Incorporation of the term
\begin{equation}\label{eq:angular-term}
\frac{|2 \textbf{M}[u_0]|^2}{32 V[u_0]} \norm{u_0}_2^{2\frac{(1-s_c)}{s_c}}
\end{equation}
causes this inequality to provide a weaker condition, which allows us to establish results for initial data which is, in a sense, further from the ground state.
\par In fact, we find that the term \eqref{eq:angular-term} provides a control of solutions which is less time dependent than
\begin{equation}\notag
\frac{|V_t[u_0]|^2}{32 V[u_0]} \norm{u_0}_2^{2\frac{(1-s_c)}{s_c}},
\end{equation}
especially for the case of global existence. In Theorem \ref{thm:main-bidirectional}, consideration of the term \eqref{eq:angular-term} on its own produces a global existence result which is a direct extension of global existence below the ground state, weakening the energy assumption from
\begin{equation}\notag
E[u_0] \norm{u_0}_2^{2\frac{(1-s_c)}{s_c}} <  E[u_Q] \norm{u_Q}_2^{2\frac{(1-s_c)}{s_c}},
\end{equation}
to
\begin{equation}\label{eq:energy-angular-intro-1}
 \left[ E[u_0] - \frac{|2 \textbf{M}[u_0]|^2}{32 V[u_0]}\right] \norm{u_0}_2^{2\frac{(1-s_c)}{s_c}} <  E[u_Q] \norm{u_Q}_2^{2\frac{(1-s_c)}{s_c}},
\end{equation}
with no further restrictions. In particular, this is a global result which has no dependence on time direction, and hence attains the goal of this article. In addition, this global result places no restriction on $V_t[u_0]$. We also use the energy condition \eqref{eq:energy-angular-intro-1} to obtain a new result concerning blow-up in two time directions.
\par For initial data far from the ground state, in addition to the results of \cite{DuyckaertsRoudenko15} discussed above, there have been a few other results for blow-up, as well as one other result concerning global existence.
\par Both \cite{DuyckaertsRoudenko15} and \cite{HolmerPlatteRoudenko10} present new criteria for blow-up. In \cite{HolmerPlatteRoudenko10}, Holmer, Platte, and Roudenko also construct new blow-up solutions for the 3d cubic equation using numerical computations, which shows that there is initial data with mass-Kato kinetic energy below the ground state which blows-up in at least one time direction. The only other result for initial data above the ground state regards global existence, and can be found in \cite{CazenaveWeissler92}, where Cazenave and Weissler demonstrate a phase modulation which allows for construction of solutions of arbitrarily large mass-energy (and arbitrarily large mass-Kato kinetic energy) which exist globally in at least one time direction.
\par The result of \cite{CazenaveWeissler92} demonstrates that there is initial data of any mass-Kato kinetic energy which exists and even scatters forward in time. One can ask if there is a similar result regarding blow-up. Surprisingly, even though reversing the phase modulation from \cite{CazenaveWeissler92} produces attractive candidates for finite time blow-up, there has only been a partial result in this direction, which is the construction of blow-up solutions found in  \cite{HolmerPlatteRoudenko10}.
\subsection{Main Results}
\par We are now prepared to state our main theorems. We begin with our result regarding global existence and blow-up in both time directions.
\begin{thm}\label{thm:main-bidirectional}
Suppose $N \ge 2$, and $0<s_c<1$, and consider initial data $u_0 \in H^1$ with $|x| u_0 \in L^2$, such that
\begin{equation}\label{eq:bidirectional-lower-energy-assumption}
\massenergy{u_0}>\massenergy{u_Q},
\end{equation}
and
\begin{equation}\label{eq:bidirectional-energy-assumption}
\left[ E[u_0] - \frac{|2 \textbf{M}[u_0]|^2}{32 V[u_0]}\right] \norm{u_0}_2^{2\frac{(1-s_c)}{s_c}} \le E[u_Q]\norm{u_Q}_2^{2\frac{(1-s_c)}{s_c}}.
\end{equation}
\begin{itemize}
    \item (Existence) If
    \begin{equation}\label{eq:kinetic-existence-bi}
    \norm{\grad |u_0|}_2^{2} \norm{u_0}_2^{2\frac{(1-s_c)}{s_c}} < \norm{\grad u_Q}_2^{2} \norm{u_Q}_2^{2\frac{(1-s_c)}{s_c}}
    \end{equation}
    then the solution $u$ with initial data $u_0$ exists for all time and scatters in $H^1$ in both time directions.
    \item (Blow-Up) If
    \begin{equation}\label{eq:kinetic-blow-up-bi}
    \norm{\grad |u_0|}_2^{2}\norm{u_0}_2^{2\frac{(1-s_c)}{s_c}} > \norm{\grad u_Q}_2^{2}\norm{u_Q}_2^{2\frac{(1-s_c)}{s_c}},
    \end{equation}
    and either,
    \begin{itemize}
    \item $V_t[u_0] = 0$ and
    \begin{align}\label{eq:bidirectional-nonlinear-condition-1}
    \frac{|2\textbf{M}[u_0]|^2}{32 V[u_0]}\norm{u_0}_2^{2\frac{(1-s_c)}{s_c}} &> P^* \left(\frac{8}{N(p-1)-4}\left(E[u_0]\norm{u_0}_2^{2\frac{(1-s_c)}{s_c}} - E[u_Q]\norm{u_Q}_2^{2\frac{(1-s_c)}{s_c}} \right)\right)\\
    &\quad + \left(E[u_0]\norm{u_0}_2^{2\frac{(1-s_c)}{s_c}} - E[u_Q]\norm{u_Q}_2^{2\frac{(1-s_c)}{s_c}} \right),\notag
    \end{align}
    where $P^*(x)$ is defined in Lemma \ref{lem:nonlinear-ineq}, or
    \item for some $\lambda>1$ small enough so that
    \[
    \left[ E[u_0] - \frac{|2 \textbf{M}[u_0]|^2}{32 \lambda V[u_0]}\right] \norm{u_0}_2^{2\frac{(1-s_c)}{s_c}} < E[u_Q]\norm{u_Q}_2^{2\frac{(1-s_c)}{s_c}},
    \]
    $u_0$ satisfies the nonlinear condition
    \begin{align}\label{eq:bidirectional-nonlinear-condition-2}
    \frac{|2\textbf{M}[u_0]|^2}{32 V[u_0]}\norm{u_0}_2^{2\frac{(1-s_c)}{s_c}} &> \lambda\, G\left(E[u_0]\norm{u_0}_2^{2\frac{(1-s_c)}{s_c}} - E[u_Q]\norm{u_Q}_2^{2\frac{(1-s_c)}{s_c}}\, ,\, \frac{|V_t[0]|^2}{(\lambda-1) V[0]}  \norm{u_0}_2^{2\frac{(1-s_c)}{s_c}}\right)
    \end{align}
    where
    \[
    G(x,y) = P^*\left(\frac{1}{2N(p-1)-8} \left(16 x+y\right)\right) + x,
    \]
    \end{itemize}
    then the solution $u$ blows-up in finite time in both time directions.
\end{itemize}
\end{thm}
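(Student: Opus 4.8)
The entire argument is organized around replacing the physical kinetic energy by the Kato kinetic energy. \emph{First} I would record the algebraic backbone: by the pointwise identity of Lemma~\ref{lem:remainder}, $\norm{\grad u}_2^2-\norm{\grad|u|}_2^2=\tfrac14\int|u|^{-2}|\mathcal{P}[u]|^2\,dx$, and by the Cauchy--Schwarz estimate of Lemma~\ref{lem:momentum-inequality} this remainder dominates $\tfrac{|2\textbf{M}[u]|^2}{16V[u]}$. Combining with conservation of energy and of angular momentum gives, at every time of existence,
\[
E_K[u](t)\;\le\;E[u_0]-\frac{|2\textbf{M}[u_0]|^2}{32\,V[u](t)}.
\]
Since \eqref{eq:bidirectional-lower-energy-assumption} forces $\textbf{M}[u_0]\neq0$, the threshold hypothesis \eqref{eq:bidirectional-energy-assumption} is exactly the assertion that, at $t=0$, the right side is at most $E[u_Q]\norm{u_Q}_2^{2(1-s_c)/s_c}/\norm{u_0}_2^{2(1-s_c)/s_c}$; equivalently $V[u_0]\le V^{\ast}$, where $V^{\ast}$ is the unique variance at which the bound meets the ground state. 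On the set $\{V[u](t)\le V^{\ast}\}$ we therefore have $\masskatoenergy{u}(t)\le\massenergy{u_Q}$, so by Remark~\ref{rem:grad-comp} the modulus $|u(t)|\in H^1$ lies in the below--threshold regime of the sharp Gagliardo--Nirenberg inequality. \emph{Second}, I would exploit that the hypotheses of both alternatives are invariant under the time reversal $u_0\mapsto\overline{u_0}$ (mass, energy, $V[u_0]$, $\norm{\grad|u_0|}_2$, $|V_t[u_0]|$ and $|2\textbf{M}[u_0]|$ are all preserved): it thus suffices to prove forward-in-time scattering in the existence case and forward-in-time blow-up in the blow-up case, the negative-time statements following by applying the result to $\overline{u_0}$.

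For the existence part, set $x(t)=\masskatokineticenergy{u}(t)$ and let $f$ be the Gagliardo--Nirenberg profile with $\masskatoenergy{u}(t)\ge f(x(t))$, strictly increasing up to its unique maximum $\massenergy{u_Q}$ at $x=\masskineticenergy{u_Q}$. Assumption \eqref{eq:kinetic-existence-bi} places $x(0)$ strictly on the lower branch. The crucial elementary fact is that a bound on the Kato kinetic energy already precludes blow-up: since $\norm{|u|}_2=\norm{u_0}_2$ and $\norm{|u|}_{p+1}=\norm{u}_{p+1}$, Gagliardo--Nirenberg applied to $|u|$ bounds $\norm{u(t)}_{p+1}^{p+1}$ by a power of $\norm{\grad|u(t)|}_2^2$, whence energy conservation $\norm{\grad u(t)}_2^2=2E[u_0]+\tfrac{2}{p+1}\norm{u(t)}_{p+1}^{p+1}$ controls the physical kinetic energy. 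Thus I would run a continuity argument: while $V[u]<V^{\ast}$ the inequality $f(x(t))\le\masskatoenergy{u}(t)<\massenergy{u_Q}$ has a nondegenerate gap about the maximum, so $x(t)$ cannot cross it and stays on the lower branch; the confinement can only be lost through $V[u]$ reaching $V^{\ast}$, a crossing controlled by the \emph{conserved} angular momentum term together with the variance dynamics. This yields global existence with the Kato kinetic energy below threshold for all positive time, which is dynamically consistent with dispersion, along which $\norm{\grad|u(t)|}_2^2\to0$. Scattering is then obtained from the below--threshold concentration--compactness and rigidity scheme underlying Theorem~\ref{thm:intro-ref-below}, packaged as in Corollary~\ref{cor:scattering}: a non-scattering global solution would, after extraction of a compact critical element, violate the strict Kato inequality that persists along the flow.

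For the blow-up part one works on the upper branch. Under \eqref{eq:kinetic-blow-up-bi} we have $x(0)>\masskineticenergy{u_Q}$, and the same gap argument confines $x(t)$ to the upper branch; as the solution begins to contract, $V[u]$ decreases, which \emph{lowers} $E[u_0]-\tfrac{|2\textbf{M}[u_0]|^2}{32V}$, reinforcing the confinement and enlarging the Gagliardo--Nirenberg defect. The driving estimate is the virial identity rewritten through energy conservation,
\[
V_{tt}[u]=-2(N(p-1)-4)\norm{\grad u}_2^2+4N(p-1)E[u_0],
\]
together with $\norm{\grad u}_2^2\ge\norm{\grad|u|}_2^2+\tfrac{|2\textbf{M}[u_0]|^2}{16V[u]}$. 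The quantitative Gagliardo--Nirenberg defect of Lemma~\ref{lem:nonlinear-ineq}, encoded by $P^{\ast}$, measures how far $\norm{\grad|u|}_2^2$ must sit above $\norm{\grad u_Q}_2^2$ once $\masskatoenergy{u}$ lies a prescribed amount below $\massenergy{u_Q}$, and \eqref{eq:bidirectional-nonlinear-condition-1} is calibrated so that when $V_t[u_0]=0$ these two lower bounds force $V_{tt}[u](0)<0$; the reinforcement above then propagates $V_{tt}[u]\le-\delta<0$, so $V[u]$, which has a maximum at $t=0$, reaches $0$ in finite positive and negative time. In the general case the parameter $\lambda>1$ splits off a fraction of the angular-momentum term to create a strict sub-threshold buffer, while the auxiliary term $\tfrac{|V_t[u_0]|^2}{(\lambda-1)V[u_0]}$ in $G$ absorbs the first variation $V_t[u_0]$ through a Young-type inequality; \eqref{eq:bidirectional-nonlinear-condition-2} then guarantees a Glassey-type strict concavity of $V[u]$ (or a suitable power of it) that drives it to $0$ in finite positive time irrespective of the sign of $V_t[u_0]$, with the negative-time statement again following by time reversal.

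The main obstacle, in both alternatives, is propagating the threshold dichotomy for \emph{all} time even though the modified energy $E[u_0]-\tfrac{|2\textbf{M}[u_0]|^2}{32V[u](t)}$ moves with the variance $V[u](t)$. Unlike the $V_t$-based bound of Theorem~\ref{thm:intro-ref-DR}, the angular-momentum term is built from a conserved quantity, which is what makes the control essentially two-sided; the delicate point is the behaviour at the critical variance $V^{\ast}$, and closing this is where the continuity/bootstrap argument (existence) and the self-reinforcing virial concavity together with the calibrated data $P^{\ast}$, $G$ and the buffer $\lambda$ (blow-up) do the real work.
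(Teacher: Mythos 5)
Your blow-up sketch follows essentially the paper's route: reduction by time reversal, the buffer $\lambda$ keeping $V[u](t)\le \lambda V[u_0]$ so that the Kato energy stays strictly below $\massenergy{u_Q}$, and Lemma \ref{lem:nonlinear-ineq} fed into the virial identity to produce the quantified concavity $V_{tt}\le -\frac{|V_t[u_0]|^2}{(\lambda-1)V[u_0]}$, which forces $V_t$ to vanish before $V$ can reach $\lambda V[u_0]$. The paper then hands off to Theorem \ref{thm:main-unidirectional} and to the case $V_t=0$ rather than rerunning a Glassey argument, but that is presentational; your calibration claims for \eqref{eq:bidirectional-nonlinear-condition-1} and \eqref{eq:bidirectional-nonlinear-condition-2} are correct.

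The existence part, however, has a genuine gap. In your first step you discard the $|V_t|^2$ term of Lemma \ref{lem:momentum-inequality}, retaining only $E_K[u](t)\le E[u_0]-\frac{|2\textbf{M}[u_0]|^2}{32V[u](t)}$, so your sole mechanism for keeping $\masskatoenergy{u(t)}$ below $\massenergy{u_Q}$ --- which is what Corollary \ref{cor:scattering} demands, \emph{uniformly in time} --- is the confinement $V[u](t)\le V^*$. That confinement fails exactly in the regime you are trying to reach: a scattering solution has $V[u](t)\to\infty$, so $V$ must cross $V^*$ (this happens as soon as $V_t>0$ persists), and beyond the crossing the angular term $\frac{|2\textbf{M}[u_0]|^2}{32V[u](t)}$ tends to $0$ and carries no sub-threshold information at all. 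Your sentence ``a crossing controlled by the conserved angular momentum term together with the variance dynamics'' is precisely the unproved step. The paper closes it with the term you dropped: while the solution is on the lower branch with sub-threshold Kato energy and $V_t\ge 0$, the virial identity together with Lemma \ref{lem:momentum-inequality} gives $2V_{tt}>\frac{|V_t|^2+|2\textbf{M}|^2}{V}$, whence by the computation \eqref{eq:varcomp1}
\[
\partial_t\left[\frac{|V_t[u]|^2+|2\textbf{M}[u]|^2}{V[u]}\right]>0,
\]
so the \emph{combined} quantity is monotone increasing: the growth of $\frac{|V_t|^2}{V}$ compensates for the decay of $\frac{|2\textbf{M}|^2}{V}$, and the full energy condition \eqref{eq:unidirectional-energy-assumption} propagates for all positive time. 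This is the bootstrap of Theorem \ref{thm:main-unidirectional}; the bidirectional existence proof is then a case analysis (if $V_t[u_0]\ge 0$, apply that theorem directly; if $V_t[u_0]<0$, then $V$ decreases, the angular term strictly improves, and one bootstraps until either $V_t$ turns nonnegative or stays negative for all time). Any repair of your argument must reinstate the $|V_t|^2$ term; angular momentum alone cannot control the crossing of $V^*$.
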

\begin{remark}
There exist infinitely many initial data with $\massenergy{u_0}$ arbitrarily large and with $\frac{|2 \textbf{M}[u_0]|^2}{32 V[u_0]} \norm{u_0}_2^{2\frac{(1-s_c)}{s_c}}$ also sufficiently large for $u_0$ to satisfy the hypotheses of Theorem \ref{thm:main-bidirectional}.
\end{remark}
\begin{remark}
There exists scattering theory for initial data which is small in the critical norm. One can construct data which is small in critical norm and has large mass-energy and mass-kinetic energy by choosing suitable $\widehat{u_0}$. Our result and previous work above the ground state (e.g. \cite{DuyckaertsRoudenko15}) do not require smallness of critical norm and the bound on mass-Kato kinetic energy does not impose a bound on the critical norm.
\end{remark}
\par For discussion of the function $P^*$ and conditions \eqref{eq:bidirectional-nonlinear-condition-1} and \eqref{eq:bidirectional-nonlinear-condition-2}, see Remark \ref{rem:comp-1} and Lemma \ref{lem:nonlinear-ineq}. We also obtain the following result for behavior in one time direction.
\begin{thm}\label{thm:main-unidirectional}
Suppose $N \ge 2$, and $0<s_c<1$, and consider initial data $u_0 \in H^1$ with $|x| u_0 \in L^2$, such that
\begin{equation}\label{eq:unidirectional-lower-energy-assumption}
\massenergy{u_0}>\massenergy{u_Q},
\end{equation}
and
\begin{equation}\label{eq:unidirectional-energy-assumption}
 \left[ E[u_0] - \frac{|V_t[u_0]|^2 + |2 \textbf{M}[u_0]|^2}{32 V[u_0]}\right] \norm{u_0}_2^{2\frac{(1-s_c)}{s_c}} \le  E[u_Q] \norm{u_Q}_2^{2\frac{(1-s_c)}{s_c}}.
\end{equation}

\begin{itemize}
    \item (Existence) If
    \begin{equation}\label{eq:kinetic-existence-uni}
    \norm{\grad |u_0|}_2^{2} \norm{u_0}_2^{2\frac{(1-s_c)}{s_c}} < \norm{\grad u_Q}_2^{2} \norm{u_Q}_2^{2\frac{(1-s_c)}{s_c}}
    \end{equation}
    and
    \begin{equation}\label{eq:variance-existence-uni}
    V_t[u_0] \ge 0,
    \end{equation}
    then the solution $u$ with initial data $u_0$ exists for all positive time and scatters in $H^1$ forward in time.
    \item (Blow-Up) If
    \begin{equation}\label{eq:kinetic-blow-up-uni}
    \norm{\grad |u_0|}_2^{2 } \norm{u_0}_2^{2\frac{(1-s_c)}{s_c}} > \norm{\grad u_Q}_2^{2} \norm{u_Q}_2^{2\frac{(1-s_c)}{s_c}},
    \end{equation}
    and
    \begin{equation}\label{eq:variance-blow-up-uni}
    V_t[u_0] < 0,
    \end{equation}
    then if $u_0$ satisfies the nonlinear condition
    \begin{align}\label{eq:unidirectional-nonlinear-condition}
    \frac{|2\textbf{M}[u_0]|^2}{32 V[u_0]}\norm{u_0}_2^{2\frac{(1-s_c)}{s_c}} > F \left( \left(E[u_0] - \frac{|V_t[u_0]|^2}{32 V[u_0]}\right)\norm{u_0}_2^{2\frac{(1-s_c)}{s_c}} - E[u_Q]\norm{u_Q}_2^{2\frac{(1-s_c)}{s_c}} \right)
    \end{align}
    where
    \begin{equation}\notag
    F(x) = \begin{cases}
    x + P^*\left (\frac{8}{N(p-1)-4}\, x \right) & x \ge 0\\ 0 & x \le 0,
    \end{cases}
    \end{equation}
    and $P^*(x)$ is defined in Lemma \ref{lem:nonlinear-ineq}, then the solution $u$ with initial data $u_0$ blows-up in finite positive time.
    \end{itemize}
\end{thm}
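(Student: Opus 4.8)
The plan is to reduce both halves of the theorem to the sub-threshold theory by controlling the \emph{Kato} kinetic energy $\norm{\grad |u|}_2^2$ of the solution, letting the angular momentum enter only through the pointwise identity relating kinetic and Kato kinetic energy. I begin from the decomposition $\norm{\grad u}_2^2 = \norm{\grad |u|}_2^2 + R$, with $R = \frac{1}{4}\int |u|^{-2}|\mathcal{P}[u]|^2\,dx$, recorded in Lemma \ref{lem:remainder}, and combine it with the Lagrange identity $|x|^2|\mathcal{P}[u]|^2 = (\textbf{x}\cdot\mathcal{P}[u])^2 + |\mathcal{P}[u]\wedge\textbf{x}|^2$. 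Applying Cauchy--Schwarz separately to $V_t = 2\int\textbf{x}\cdot\mathcal{P}[u]\,dx$ and to $2\textbf{M} = 2\int\mathcal{P}[u]\wedge\textbf{x}\,dx$ and summing yields $\frac{|V_t|^2 + |2\textbf{M}|^2}{32V}\le \frac{1}{2}\big(\norm{\grad u}_2^2 - \norm{\grad |u|}_2^2\big)$, i.e. $E_K[u]\le E[u] - \frac{|V_t|^2 + |2\textbf{M}|^2}{32V}$ at every time; this is Lemma \ref{lem:momentum-inequality}. Evaluated at $t=0$, the energy hypothesis \eqref{eq:unidirectional-energy-assumption} gives $\masskatoenergy{u_0}\le\massenergy{u_Q}$, which is exactly the regime where the Double Cut Lemma equivalences of Remark \ref{rem:grad-comp} hold, so I may pass freely between the Kato kinetic energy, the quantity $\norm{u_0}_{p+1}^{p+1}\norm{u_0}_2^{2\frac{(1-s_c)}{s_c}}$, and the kinetic energy thresholds.

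For the scattering statement I would first establish global existence forward in time by trapping the Kato kinetic energy below the ground state. By the equivalences, \eqref{eq:kinetic-existence-uni} places $\norm{u_0}_{p+1}^{p+1}\norm{u_0}_2^{2\frac{(1-s_c)}{s_c}}$ strictly below the soliton value; a continuity argument then shows this is preserved forward in time, for at a first crossing time the Kato energy bound together with the sharp Gagliardo--Nirenberg inequality (optimized by $Q$) would force $|u|$ to equal the ground state profile, whence $R=0$ and $\massenergy{u_0}=\massenergy{u_Q}$, contradicting \eqref{eq:unidirectional-lower-energy-assumption}. Energy conservation then bounds $\norm{\grad u(t)}_2$ and gives global existence, after which scattering follows from the sub-threshold scattering criterion recorded in Corollary \ref{cor:scattering}, run with the trapped Kato quantities in the spirit of Duyckaerts--Roudenko.

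For the blow-up statement I would run the convexity argument on $V[u](t)$. Rewriting the virial identity as $V_{tt} = 4N(p-1)E[u_0] - 2(N(p-1)-4)\norm{\grad u(t)}_2^2$, it suffices to force $\norm{\grad u(t)}_2^2$ above the level $\frac{2N(p-1)}{N(p-1)-4}E[u_0]$ with a fixed margin, for then $V$ is concave and $V_t[u_0]<0$ drives $V$ to zero in finite positive time. Writing $\norm{\grad u}_2^2 = \norm{\grad |u|}_2^2 + R$ and using $R\ge\frac{|V_t|^2 + |2\textbf{M}[u_0]|^2}{16V}$, I split the needed excess into a Kato kinetic part, trapped \emph{above} threshold by the same continuity argument (now using \eqref{eq:kinetic-blow-up-uni}), and a remainder supplied by the conserved angular momentum. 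The coercivity function $P^*$ of Lemma \ref{lem:nonlinear-ineq} measures the minimal kinetic-energy excess forced by a given excess of the $V_t$-modulated mass-energy above $\massenergy{u_Q}$, and the nonlinear condition \eqref{eq:unidirectional-nonlinear-condition}, through $F(x) = x + P^*\big(\frac{8}{N(p-1)-4}x\big)$, guarantees that $\frac{|2\textbf{M}[u_0]|^2}{32V}\norm{u_0}_2^{2\frac{(1-s_c)}{s_c}}$ exceeds this margin and closes the estimate. As a consistency check, when the $V_t$-modulated mass-energy already lies at or below $\massenergy{u_Q}$ one has $F\equiv0$ and the statement reduces to the blow-up half of Theorem \ref{thm:intro-ref-DR}.

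The main obstacle is the time-propagation of the threshold and energy conditions: neither $E_K$ nor the Kato kinetic energy is conserved, and the effective energy $E[u_0] - \frac{|V_t(t)|^2 + |2\textbf{M}[u_0]|^2}{32V(t)}$ depends on the evolving $V(t)$ and $V_t(t)$. In the blow-up regime the decay of $V$ is favorable, since it only enlarges the angular contribution, so the difficulty is to organize a bootstrap in which the trapping above threshold and the negativity of $V_{tt}$ reinforce one another; the fixed margin produced by $P^*$ is what makes this bootstrap self-consistent. In the scattering regime the same effective energy can instead drift upward as $V$ grows, and the delicate point is to exploit $V_t\ge0$ to keep the Kato kinetic energy from reaching the threshold before the scattering mechanism applies.
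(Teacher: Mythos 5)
Your proposal assembles the correct ingredients (Lemma \ref{lem:remainder}, Lemma \ref{lem:momentum-inequality}, the function $P^*$ of Lemma \ref{lem:nonlinear-ineq}, Corollary \ref{cor:scattering}), and its broad strategy of trapping the Kato quantities relative to the ground state is indeed the paper's strategy. But the step on which everything rests, the forward-in-time propagation of that trapping, is not proved, and the mechanism you substitute for it does not work. Your first-crossing argument invokes ``the Kato energy bound'' $\masskatoenergy{u(t_*)}\le\massenergy{u_Q}$ at the crossing time $t_*$, but $E_K$ is not conserved: at time $t$ this bound comes from Lemma \ref{lem:momentum-inequality} together with an upper bound on $E[u_0]-\frac{|V_t[u](t)|^2+|2\textbf{M}[u_0]|^2}{32V[u](t)}$, which depends on the evolving $V,V_t$ and is precisely the quantity that must be propagated. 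The paper closes this loop with the differential identity
\[
\partial_t\left[\frac{|V_t[u]|^2+|2\textbf{M}[u]|^2}{V[u]}\right]=\frac{V_t[u]}{V[u]}\left[2V_{tt}[u]-\frac{|V_t[u]|^2+|2\textbf{M}[u]|^2}{V[u]}\right],
\]
showing the bracket has a definite sign in each regime (positive under \eqref{eq:kinetic-existence-uni} via the virial identity and Lemma \ref{lem:momentum-inequality}; negative under \eqref{eq:unidirectional-energy-assumption}, \eqref{eq:kinetic-blow-up-uni}, and the nonlinear condition), so that with the assumed sign of $V_t$ the subtracted term in the effective energy is monotonically increasing; this is exactly where $V_t[u_0]\ge 0$, respectively $V_t[u_0]<0$, enters, and it is run as a bootstrap with strict inequalities in which the energy condition and the kinetic condition are propagated jointly. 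You name this difficulty in your final paragraph, but naming it is not resolving it.

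Moreover, the contradiction you extract at the crossing time is false. Equality $\masskatokineticenergy{u(t_*)}=\masskineticenergy{u_Q}$ together with $\masskatoenergy{u(t_*)}\le\massenergy{u_Q}$ and the sharp Gagliardo--Nirenberg inequality forces at most that $|u(t_*)|$ is a rescaled, translated ground state; it does not force $R=\norm{\grad u}_2^2-\norm{\grad|u|}_2^2$ to vanish, since the phase of $u$ can carry arbitrary additional kinetic energy. In fact $R>0$ is the typical situation in this theorem: \eqref{eq:unidirectional-lower-energy-assumption} together with $\masskatoenergy{u_0}\le\massenergy{u_Q}$ already forces $R=2\left(E[u_0]-E_K[u_0]\right)>0$, so no contradiction with \eqref{eq:unidirectional-lower-energy-assumption} can arise along these lines. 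What actually rules out a crossing in the paper is the propagation of the \emph{strict} Kato-energy inequality for $t>0$ (via the monotonicity above), after which the Gagliardo--Nirenberg lower bound alone forbids the threshold value, with no rigidity needed. A smaller but related gap lies in your blow-up half: the uniform margin $V_{tt}\le-\delta$ is asserted, not derived, and the paper in fact never establishes such a margin; it instead shows $\partial_t\left[|V_t[u]|^2/V[u]\right]>0$, hence $\partial_t\left[V_t[u]/\sqrt{V[u]}\right]<0$, so that $\sqrt{V[u](t)}$ decreases at least linearly, and this conclusion again comes out of the same bootstrap you left open.
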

\begin{remark}
\par When \eqref{eq:bidirectional-lower-energy-assumption} in Theorem \ref{thm:main-bidirectional} or \eqref{eq:unidirectional-lower-energy-assumption} in Theorem \ref{thm:main-unidirectional} do not hold, the initial data is at or below the ground state and, as noted above, global dynamics have been previously resolved.
\end{remark}
\begin{remark} \label{rem:comp-1}
\par In the case
\begin{equation}\label{eq:energy-comp}
 \left[ E[u_0] - \frac{|V_t[u_0]|^2}{32 V[u_0]}\right] \norm{u_0}_2^{2\frac{(1-s_c)}{s_c}} \le  E[u_Q] \norm{u_Q}_2^{2\frac{(1-s_c)}{s_c}},
\end{equation}
the assumption \eqref{eq:unidirectional-lower-energy-assumption} implies that \eqref{eq:unidirectional-nonlinear-condition} holds trivially, and Theorem \ref{thm:main-unidirectional} reduces exactly to Theorem \ref{thm:intro-ref-DR}. Hence Theorem \ref{thm:main-unidirectional} presents a new result in the case
\begin{equation}\notag
 \left[ E[u_0] - \frac{|V_t[u_0]|^2}{32 V[u_0]}\right] \norm{u_0}_2^{2\frac{(1-s_c)}{s_c}} - E[u_Q] \norm{u_Q}_2^{2\frac{(1-s_c)}{s_c}}>0.
\end{equation}
\par The nonlinear condition \eqref{eq:unidirectional-nonlinear-condition} asserts that when the failure to satisfy \eqref{eq:energy-comp} is large, in the sense that
\[
\left[ E[u_0] - \frac{|V_t[u_0]|^2}{32 V[u_0]}\right] \norm{u_0}_2^{2\frac{(1-s_c)}{s_c}} - E[u_Q] \norm{u_Q}_2^{2\frac{(1-s_c)}{s_c}}
\]
is large, then in order to obtain blow-up, the angular momentum must also be large, in the sense that
\[
\frac{|2\textbf{M}[u_0]|^2}{32V[u_0]}\norm{u_0}_2^{2\frac{(1-s_c)}{s_c}}
\]
must be large.
\par The nonlinear condition \eqref{eq:bidirectional-nonlinear-condition-1} is simply \eqref{eq:unidirectional-nonlinear-condition} with $V_t[u_0] = 0$. The condition \eqref{eq:bidirectional-nonlinear-condition-2} serves a similar function to \eqref{eq:unidirectional-nonlinear-condition}, but is adapted to a somewhat more delicate setting. In particular, while the definite sign on $V_t[u_0]$ contributes to blow-up for results in one time direction. The loss of definite sign in the two directional case makes $V_t[u_0]$ an obstruction. This obstruction accounts for the changes in the nonlinear condition.
\end{remark}
\par The global existence and blow-up results, Theorems \ref{thm:main-bidirectional} and \ref{thm:main-unidirectional}, are obtained by using bootstrap methods. The key property allowing us to overcome the breakdown of standard dichotomy arguments is that while the mass-energy of the solutions that we consider is above the ground state, the mass-Kato energy remains below the ground state. The mass-Kato energy can be thought of as mass-energy with a modification by momentum, and can be analyzed with methods analogous to the case of mass-energy.
\par Compared to the prior work for large mass-energy, the main theorem of Duyckaerts and Roudenko in \cite{DuyckaertsRoudenko15}, our main theorems take a different perspective. In particular, Duyckaerts and Roudenko are able to reduce control of \eqref{eq:intro-energy-above} and other key time dependent properties of the solution to the control of certain functions defined by a solution. In our results, we control such properties of solutions directly. Since initial data considered by Duyckaerts and Roudenko satisfies the hypothesis of Theorem \ref{thm:main-unidirectional}, our proof offers a somewhat alternate perspective for this prior result.
\par We also establish the following result of independent interest regarding the quantity $\norm{ \grad |u|}_2^2$. 
\begin{thm}\label{thm:ske-continuous}
The map from $H^1$ into $\mathbb{R} \ge 0$ given by $u \mapsto \norm{\grad |u|}_2^2$ is continuous.
\end{thm}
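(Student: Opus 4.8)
The plan is to decompose the Dirichlet energy $\norm{\grad u}_2^2$ into a Kato piece $\norm{\grad|u|}_2^2$ and a momentum piece, to show that \emph{each} piece is lower semicontinuous on $H^1$, and then to observe that two lower-semicontinuity statements whose sum is the manifestly continuous functional $u \mapsto \norm{\grad u}_2^2$ force \emph{both} pieces to be continuous. This sidesteps the difficulty that $u \mapsto u/|u|$ is discontinuous on the zero set of $u$, which is the real obstruction to differentiating $\norm{\grad|u|}_2^2$ directly.

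\emph{Setup.} By Lemma~\ref{lem:remainder} and the definition $\mathcal{P}[u] = 2\,\im[\overline{u}\,\grad u]$, for every $u \in H^1$ we have the identity
\[
\norm{\grad u}_2^2 = \norm{\grad|u|}_2^2 + H(u), \qquad H(u) := \int \frac{|\im[\overline{u}\,\grad u]|^2}{|u|^2}\,dx,
\]
where the integrand of $H$ is understood to vanish on $\{u=0\}$; I will also use the companion pointwise relation $\grad|u| = |u|^{-1}\operatorname{Re}[\overline{u}\,\grad u]$ on $\{u\neq 0\}$ together with the fact that $\grad u = 0$ almost everywhere on $\{u=0\}$. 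For $\epsilon>0$ I then set
\[
G_\epsilon(u) := \int \frac{|\operatorname{Re}[\overline{u}\,\grad u]|^2}{|u|^2+\epsilon^2}\,dx, \qquad H_\epsilon(u) := \int \frac{|\im[\overline{u}\,\grad u]|^2}{|u|^2+\epsilon^2}\,dx.
\]
Because the numerators vanish wherever $u=0$ and the integrands increase as $\epsilon \downarrow 0$, monotone convergence gives $G_\epsilon(u) \uparrow \norm{\grad|u|}_2^2$ and $H_\epsilon(u) \uparrow H(u)$. Hence it suffices to prove that each $G_\epsilon$ and each $H_\epsilon$ is continuous on $H^1$: then $\norm{\grad|u|}_2^2 = \sup_{\epsilon>0} G_\epsilon$ and $H = \sup_{\epsilon>0} H_\epsilon$, as suprema of continuous functions, are lower semicontinuous.

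\emph{Continuity of the regularized functionals.} Writing $u = a+ib$ with $a,b$ real in $H^1$, the vector field whose squared $L^2$-norm is $G_\epsilon(u)$ has $j$-th component $(a\,\partial_j a + b\,\partial_j b)(|u|^2+\epsilon^2)^{-1/2}$, and the one for $H_\epsilon(u)$ has $j$-th component $(a\,\partial_j b - b\,\partial_j a)(|u|^2+\epsilon^2)^{-1/2}$; in each case the coefficients multiplying $\partial_j a$ and $\partial_j b$ are continuous functions of $(a,b)$ bounded by $1$. Given $u_n \to u$ in $H^1$, I would write a typical term (with $(\cdot)$ standing for $a$ or $b$) as $c(u_n)\bigl(\grad(\cdot)_n - \grad(\cdot)\bigr) + \bigl(c(u_n)-c(u)\bigr)\grad(\cdot)$: the first summand has $L^2$-norm at most $\norm{\grad u_n - \grad u}_2 \to 0$, while the second, after passing to a subsequence along which $u_n \to u$ almost everywhere, tends to $0$ in $L^2$ by dominated convergence (it is bounded by $2|\grad u|$), and the subsequence principle promotes this to convergence of the full sequence. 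Thus $u \mapsto (|u|^2+\epsilon^2)^{-1/2}\operatorname{Re}[\overline{u}\,\grad u]$ and its imaginary analogue are continuous from $H^1$ into $L^2$, so $G_\epsilon$ and $H_\epsilon$ are continuous.

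\emph{Conclusion and main obstacle.} The functional $u \mapsto \norm{\grad u}_2^2$ is continuous on $H^1$ and equals the sum of the two nonnegative lower-semicontinuous functionals $\norm{\grad|u|}_2^2$ and $H(u)$. Therefore $\norm{\grad|u|}_2^2 = \norm{\grad u}_2^2 - H(u)$ is a difference of a continuous and a lower-semicontinuous functional, hence upper semicontinuous; being also lower semicontinuous, it is continuous, which is the assertion. I expect the main obstacle to be the continuity of the regularized functionals $G_\epsilon$ and $H_\epsilon$ — precisely the almost-everywhere/dominated-convergence step — while the decisive structural point is the elementary observation that lower semicontinuity of both summands, together with continuity of their sum, yields continuity of each.
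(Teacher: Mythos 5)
Your proof is correct, and it takes a genuinely different route from the paper's. You use the remainder identity of Lemma \ref{lem:remainder} to split the manifestly continuous Dirichlet energy into the Kato piece $\norm{\grad|u|}_2^2$ and the momentum piece $H(u)$, regularize both denominators by $\epsilon^2$, verify that each regularized functional is continuous on $H^1$ (your bounded-coefficient decomposition is sound: the coefficients $a/\sqrt{a^2+b^2+\epsilon^2}$ and $b/\sqrt{a^2+b^2+\epsilon^2}$ are continuous and bounded by $1$, so the dominated-convergence-plus-subsequence argument works), obtain lower semicontinuity of each piece as a monotone supremum of continuous functionals, and then invoke the elementary fact that two finite lower semicontinuous functionals summing to a continuous one must each be continuous. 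Note that Lemma \ref{lem:remainder} is proved by pointwise identities independent of the theorem, so there is no circularity in citing it even though it appears after the theorem in the paper. The paper instead proves the stronger-looking statement that $u \mapsto \grad|u|$ is sequentially continuous from $H^1$ into $L^2$: it estimates $\grad|u+v_n| - \grad|u|$ pointwise, disposes of the degenerate sets where $u$, $u+v_n$, or $v_n$ vanish by Kato-type bounds, reduces the remaining set via Lemma \ref{lem:ang-dif} to showing $\min\left\{1, |v_n|/|u|\right\}|\partial_j u| \to 0$ in $L^2$, and handles that with a two-parameter dyadic decomposition of level sets of $|u|$ and $|v_n|$ (the sets $\Omega_{M,L}^{v_n}$ and auxiliary functions $f_M$) plus two applications of dominated convergence. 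What your approach buys: it is softer and shorter, entirely bypasses the level-set combinatorics that the paper itself identifies as the main difficulty, and yields continuity of the momentum functional $H$ as a byproduct. What the paper's approach buys: convergence of the gradients themselves in $L^2$, not merely of their norms --- although this is recoverable from your statement, since $|u_n| \to |u|$ in $L^2$ together with the Kato bound gives $\grad|u_n| \rightharpoonup \grad|u|$ weakly in $L^2$, and weak convergence plus convergence of norms upgrades to strong convergence.
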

\par A major difficulty in establishing Theorem \ref{thm:ske-continuous} is that in general, for small $v$, one can control
\begin{equation}\label{eq:ske-local}
\left| \grad | u+v| - \grad |u| \right|
\end{equation}
by $|\grad u|$, but not by smallness of $v$. More so, one has worse control of \eqref{eq:ske-local} when $|u|$ is small.
\par To obtain this continuity result, we in essence analyze specific properties of functions $\frac{1}{|u|}$ for $u \in H^1(\mathbb{R}^N)$ by constructing a decomposition of the measure space $\mathbb{R}^N$ based on interacting dyadic decompositions of functions.

\subsection*{Acknowledgment}
This project was carried out as a part of the author's thesis work. The author would like to thank their advisor, Magdalena Czubak, for her many valuable suggestions regarding this project as well as her immense generosity and support throughout the PhD training.

\section{Kato Kinetic Energy and the Continuity Result}\label{sec:continuity-result}
We begin with the proof of Theorem \ref{thm:ske-continuous}, and follow with general discussion of Kato kinetic energy.
\subsection{Proof of Theorem \ref{thm:ske-continuous}}
\par We consider $u \in H^1$. From Kato's inequality, we have that Kato kinetic energy is bounded by kinetic energy. That is,
\[\norm{\grad |u|}_2^2 \le \norm{\grad u}_2^2.\]
Theorem \ref{thm:ske-continuous} will allow us to upgrade from a bound to continuity.\\
\par An elementary lemma will be useful for establishing this result.
\begin{lemma}\label{lem:ang-dif}
If $z,w \in \mathbb{C}$ with $z \neq 0$, and $w \neq 0$ then
\[
\left| \frac{z+w}{|z+w|} - \frac{z}{|z|} \right| \le 2\, \mbox{min} \left\{ 1, \frac{|w|}{|z|} \right\}.
\]
\end{lemma}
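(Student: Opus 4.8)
The plan is to establish the two competing bounds separately and then combine them into the single $\min$. Since both $\frac{z+w}{|z+w|}$ and $\frac{z}{|z|}$ are points on the unit circle (this tacitly requires $z+w \neq 0$, which is implicit in the statement, as otherwise the left-hand side is undefined), the triangle inequality immediately gives
\[
\left| \frac{z+w}{|z+w|} - \frac{z}{|z|} \right| \le \left| \frac{z+w}{|z+w|}\right| + \left|\frac{z}{|z|}\right| = 2,
\]
which already handles the regime $\frac{|w|}{|z|} \ge 1$.

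For the remaining estimate I would show $\left| \frac{z+w}{|z+w|} - \frac{z}{|z|}\right| \le \frac{2|w|}{|z|}$ directly, by an add-and-subtract splitting. The key algebraic identity is
\[
\frac{z+w}{|z+w|} - \frac{z}{|z|} = \frac{w}{|z|} + (z+w)\left(\frac{1}{|z+w|} - \frac{1}{|z|}\right),
\]
which one checks by expanding the right-hand side. The first term contributes exactly $\frac{|w|}{|z|}$ in modulus. For the second term I would compute
\[
\left| (z+w)\left(\frac{1}{|z+w|} - \frac{1}{|z|}\right)\right| = \frac{\bigl| \,|z| - |z+w|\, \bigr|}{|z|} \le \frac{|w|}{|z|},
\]
where the final inequality is the reverse triangle inequality $\bigl| |z| - |z+w| \bigr| \le |(z+w) - z| = |w|$. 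Adding the two contributions yields the bound $\frac{2|w|}{|z|}$, and together with the bound by $2$ this gives $2 \min\{1, |w|/|z|\}$.

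There is no substantive obstacle here; the lemma is entirely elementary. The one point demanding a little care is the choice of denominator in the splitting: placing $|z|$ (rather than $|z+w|$) under the first term is precisely what lets the reverse triangle inequality produce $|w|$ in the numerator and $|z|$ in the denominator, matching the desired right-hand side. Had I split so that $|z+w|$ appeared in the denominator instead, the resulting bound $\frac{2|w|}{|z+w|}$ would be weaker whenever $w$ points opposite to $z$ (so that $|z+w| < |z|$), so the asymmetric choice favoring $|z|$ is what makes the estimate sharp enough.
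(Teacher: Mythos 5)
Your proof is correct, and it takes essentially the same route as the paper: the paper gives no details beyond remarking that the lemma ``follows from the triangle inequality,'' and your add-and-subtract splitting together with the reverse triangle inequality is exactly that argument written out in full. Your observation that $z+w \neq 0$ is implicitly required (and is indeed satisfied where the paper applies the lemma, on the set $E_2^n$) is a fair and accurate point of care.
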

\par This lemma follows from the triangle inequality.
\par We are now prepared to prove Theorem \ref{thm:ske-continuous}.
\begin{proof}
\par Consider $u \in H^1$ and $\{v_n\} \subset H^1$ with $v_n \overset{H^1}{\rightarrow} 0$. We must show that
\[
\int \left|\grad\left| u+v_n \right| \right|^2 - \left|\grad\left| u \right| \right|^2 \rightarrow 0.
\]
\par Step I: \textit{Initial Bounds:}\\
\par Note that
\begin{align*}
\left|\int \left|\grad\left| u+v_n \right| \right|^2 - \left|\grad\left| u \right| \right|^2\right| &\le \int\Big| \left(\grad\left| u+v_n \right| + \grad \left| u \right| \right) \cdot \left( \grad \left| u+v_n \right| - \grad \left| u \right| \right)\Big|\\
& \le \left(2 \norm{\grad u}_2 + \norm{\grad v_n}_2\right) \left(\int \Big| \grad|u+v_n| - \grad |u| \Big|^2\right)^{1/2}.
\end{align*}
To complete the proof it is sufficient to show that
\[\grad|u+v_n| - \grad |u| \overset{L^2}{\rightarrow} 0.\]
\par We split the problem by establishing sufficient bounds with restriction to the sets
\[
E_1^n = \left\{ x \in \mathbb{R}^N: u(x) = 0,\,\,\mbox{or}\,\,u(x)+v_n(x)=0,\,\,\mbox{or}\,\, v_n(x)=0 \right\},
\]
and
\[
E_2^n = \left\{ x \in \mathbb{R}^N: u(x) \neq 0,\,\,\mbox{and}\,\,u(x)+v_n(x) \neq 0,\,\,\mbox{and}\,\,v_n(x) \neq 0\right\}.
\]
\par Step II: \textit{Bounds on $E_1^n$:}\\

\par First consider the case with $u=0$. Note that $\partial_j u = 0 $ a.e. on the set where $u = 0$. Using Kato's inequality we have the a.e. pointwise estimate
\[
\left| \grad|u+v_n| - \grad |u| \right| \le \left| \grad v_n \right|.
\]
\par Now we consider the case with $u \neq 0$ and $u+v_n = 0$. On this set $\grad u = -\grad v_n$ a.e. and so we also have that pointwise a.e.
\begin{align*}
\left| \grad \left|u+v_n \right| - \grad \left| u \right| \right| &= \left| \grad u \right|\\
& \le \left| \grad v_n \right|.
\end{align*}
\par Finally, if $u \neq 0$, $u +v_n \neq 0$ and $v_n = 0$,
\begin{align*}
\left| \grad |u+v_n| - \grad |u| \right| &=
\left|\mbox{Re} \left[\frac{\overline{u+v_n}}{|u+v_n|} \grad (u+v_n) \right] - \mbox{Re} \left[ \frac{\overline{u}}{|u|} \grad u\right]\right|\\\\
& \le \left| \grad v_n \right|
\end{align*}
As $v_n \overset{H^1}{\rightarrow} 0$, it is now sufficient to establish bounds on $E_2^n$.\\

\par Step III: \textit{Setting Up for Convergence on $E_2^n$}\\

\par From here on, we consider $x \in \mathbb{R}^N$ satisfying the conditions of $E_2^n$ with no further comment, and establish bounds for each partial derivative.\\
\par Making using of lemma \ref{lem:ang-dif}, we have the a.e. pointwise estimate
\begin{align*}
\left| \partial_j |u+v_n| - \partial_j |u| \right| &=
\left|\mbox{Re} \left[\frac{\overline{u+v_n}}{|u+v_n|} \partial_j (u+v_n) \right] - \mbox{Re} \left[ \frac{\overline{u}}{|u|} \partial_j u\right]\right|\\\\
& \le \left| \mbox{Re}\left[ \left( \frac{\overline{u+v_n}}{|u+v_n|} - \frac{\overline{u}}{|u|} \right) \partial_j u \right] \right| + \left| \partial_j v_n \right|\\\\
& \le 2\, \mbox{min} \left\{ 1, \frac{|v_n|}{|u|} \right\} \left|\partial_j u \right| + \left| \partial_j v_n \right|.
\end{align*}
Since $v_n \overset{H^1}{\rightarrow} 0$, the problem is reduced to showing that
\[
\mbox{min} \left\{ 1, \frac{|v_n|}{|u|} \right\} \left|\partial_j u \right| \overset{L^2}{\rightarrow} 0.
\]

\par Step IV: \textit{Decomposition Sets}\\

\par We now decompose $E_2^n \subset \mathbb{R}^N$ into sets defined using level sets of $u$ and $v_n$. Consider $M \in \mathbb{Z}$ and define the following sets and functions which are independent of $v_n$.
\begin{align*}
& \Omega_M \coloneqq \left\{ x \in E_2^n : 2^M \le |u(x)| < 2^{M+1}\right\},\\\\
& f_{M}: \mathbb{R}_{\ge 0} \rightarrow \mathbb{R}_{\ge 0}, \quad f_M(\lambda) \coloneqq \mbox{sup}_{\substack{A \subset \Omega_M\\ \mu(A) \le \lambda}} \,\norm{\partial_j u}_{L^2(A)}^2.
\end{align*}
\par Now we define sets which depend on both $u$ and $v_n$. Given $M \in \mathbb{Z}$, consider $L \in \mathbb{Z}$ with $L \le M$, and define
\begin{align*}
& \Omega^{v_n}_{M,L} \coloneqq
\begin{cases}
\left\{ x \in \Omega_M : 2^{L-1} \le |v_n(x)| < 2^L \right\} & L < M\\\\
\left\{ x \in \Omega_M : 2^{L-1} \le |v_n(x)| \right\} & L = M
\end{cases}
\end{align*}

\par Step V \textit{Convergence on Decomposition Sets}\\
\par The definition of $\Omega_{M,L}^{v_n}$ provides the bound
\[
\sum_{M \in \mathbb{Z}} \sum_{\substack{L \in \mathbb{Z} \\ L \le M}} 2^{2L-2} \mu\left( \Omega^{v_n}_{M,L}\right) \le \norm{v_n}_2^2,
\]
and hence
\[
\mu\left( \Omega^{v_n}_{M,L} \right) \le 2^{-2L} \left( 4\norm{v_n}_2^2\right) .
\]
Now,
\begin{align*}
\norm{\mbox{min} \left\{ 1, \frac{|v_n|}{|u|} \right\} \left|\partial_j u \right|}_{L^2( E_2^n)}^2 & \le \sum_{M \in \mathbb{Z}} \sum_{\substack{L \in \mathbb{Z}\\ L \le M}} 2^{2(L-M)} \norm{ \partial_j u}_{L^2 \left( \Omega_{M,L}^{v_n} \right)}^2\\
& \le \sum_{M \in \mathbb{Z}} \sum_{\substack{L \in \mathbb{Z}\\ L \le M}} 2^{2(L-M)} \left[ f_{M} \left( \mu \left( \cup_{\substack{J \in \mathbb{Z}\\L \le J \le M}} \Omega_{M,J}^{v_n} \right)\right) - f_{M} \left( \mu \left( \cup_{\substack{J \in \mathbb{Z}\\L+1 \le J \le M}} \Omega_{M,J}^{v_n} \right)\right) \right]\\
& = \sum_{M \in \mathbb{Z}} \sum_{\substack{L \in \mathbb{Z}\\ L \le M}} 2^{2(L-M)} \left[ f_{M} \left( \sum_{L \le J \le M} \mu \left( \Omega_{M,J}^{v_n} \right)\right) - f_{M} \left( \sum_{L+1 \le J \le M} \mu \left( \Omega_{M,J}^{v_n} \right)\right) \right].
\end{align*}
\par For fixed $M$, the dominated convergence theorem implies that
\[
\sum_{\substack{L \in \mathbb{Z}\\ L \le M}} 2^{2(L-M)} \left[ f_{M} \left( \sum_{L \le J \le M} \mu \left( \Omega_{M,J}^{v_n} \right)\right) - f_{M} \left( \sum_{L+1 \le J \le M} \mu \left( \Omega_{M,J}^{v_n} \right)\right) \right] \rightarrow 0
\]
as $n \rightarrow \infty$. Another application of the dominated convergence theorem implies that
\[
\sum_{M \in \mathbb{Z}} \sum_{\substack{L \in \mathbb{Z}\\ L \le M}} 2^{2(L-M)} \left[ f_{M} \left( \sum_{L \le J \le M} \mu \left( \Omega_{M,J}^{v_n} \right)\right) - f_{M} \left( \sum_{L+1 \le J \le M} \mu \left( \Omega_{M,J}^{v_n} \right)\right) \right] \rightarrow 0
\]
as $n \rightarrow \infty$. Hence there exist sufficient bounds on $\norm{\mbox{min} \left\{ 1, \frac{|v_n|}{|u|} \right\} \left|\partial_j u \right|}_{L^2( E_2^n)}^2$ to imply desired convergence restricted to $E_2^n$\\
\par As noted above, this is sufficient to establish the convergence,
\[
\grad |u+v_n| - \grad | u| \overset{L^2}{\rightarrow} 0,
\]
completing the proof.

\end{proof}

\subsection{Kato Kinetic Energy in NLS}\label{subsec:kato-kinetic-energy-in-nls}
We will now discuss some properties of Kato kinetic energy quantity as they relate to solutions of NLS. We begin by demonstrating an identity for $H^1$ functions which has connections to momentum in NLS. This identity is presented in \cite{CzubakMillerRoudenko24} in the more general setting of covariant derivatives, and is there used to establish dichotomy below the ground state for magnetic nonlinear Schr{\"{o}}dinger equations. Here we include the proof for the more specialized case of the standard gradient.

\begin{lemma}[Remainder Identity for Kato's Inequality]\label{lem:remainder}
For $u \in H^1$, the identity
\begin{align*}
\int_{u \neq 0} \Big| \frac{\mathcal{P}[u]}{2|u|} \Big|^2 dx &= \int_{u \neq 0} \Big| \mbox{Im} \Big[ \frac{\overline{u}}{|u|} \grad u \Big]\Big|^2 dx\\
&= \norm{\grad u}_2^2 - \norm{ \grad |u|}_2^2
\end{align*}
is satisfied.
\end{lemma}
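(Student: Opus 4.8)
The plan is to prove the identity pointwise on the set $\{u \neq 0\}$ and then argue that the set $\{u = 0\}$ contributes nothing to either side. The first equality is immediate from the definition $\mathcal{P}[u] = 2\,\im[\overline{u}\grad u]$: since $1/|u|$ is real, $\frac{\mathcal{P}[u]}{2|u|} = \frac{1}{|u|}\im[\overline{u}\grad u] = \im\big[\frac{\overline{u}}{|u|}\grad u\big]$, so the two integrands literally agree. It therefore remains only to establish the second equality.

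First I would record the pointwise decomposition, working componentwise in $j$. Because $\overline{u}/|u|$ has modulus one, multiplication by it preserves absolute value, so $|\partial_j u|^2 = \big|\frac{\overline{u}}{|u|}\partial_j u\big|^2 = \big(\mbox{Re}\big[\frac{\overline{u}}{|u|}\partial_j u\big]\big)^2 + \big(\im\big[\frac{\overline{u}}{|u|}\partial_j u\big]\big)^2$. The key observation is that the real part is exactly $\partial_j|u|$: differentiating $|u| = (u\overline{u})^{1/2}$ where $u \neq 0$ gives $\partial_j|u| = \frac{\mbox{Re}[\overline{u}\,\partial_j u]}{|u|} = \mbox{Re}\big[\frac{\overline{u}}{|u|}\partial_j u\big]$. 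Summing over $j$ yields the a.e. pointwise identity $|\grad u|^2 = \big|\grad|u|\big|^2 + \big|\im[\frac{\overline{u}}{|u|}\grad u]\big|^2$ on $\{u\neq 0\}$, which is precisely the integrand version of the claim.

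The remaining step, which is the main technical point, is to pass from $\{u\neq 0\}$ to all of $\mathbb{R}^N$. For this I would invoke the standard fact that the distributional gradient of an $H^1$ function vanishes almost everywhere on each of its level sets; applying it to $\mbox{Re}\,u$ and $\im u$ shows $\grad u = 0$ a.e. on $\{u = 0\}$, and applying it to the $H^1$ function $|u|$ (whose $H^1$ membership together with $\grad|u| = \mbox{Re}[\frac{\overline u}{|u|}\grad u]\,\mathbf{1}_{\{u\neq 0\}}$ is the content of Kato's inequality) shows $\grad|u| = 0$ a.e. on $\{u=0\}$. Consequently $\norm{\grad u}_2^2 = \int_{u\neq 0}|\grad u|^2$ and $\norm{\grad|u|}_2^2 = \int_{u\neq 0}\big|\grad|u|\big|^2$, so integrating the pointwise identity over $\{u\neq 0\}$ and rearranging gives the stated formula.

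The hard part is not the algebra but the measure-theoretic bookkeeping on $\{u=0\}$: one must take care that the formal differentiation of $|u|$ is legitimate only where $u\neq 0$, and that no gradient mass is lost on the exceptional null set. I expect to dispatch this by quoting the level-set gradient lemma rather than reproving it, after which the identity follows by a direct integration.
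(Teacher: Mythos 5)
Your proposal is correct and follows essentially the same route as the paper: the pointwise decomposition $|\grad u|^2 = |\grad|u||^2 + \big|\im\big[\frac{\overline{u}}{|u|}\grad u\big]\big|^2$ on $\{u \neq 0\}$, together with the vanishing of both $\grad u$ and $\grad |u|$ a.e.\ on $\{u=0\}$, followed by integration. The paper handles the two facts you quote (the formula for $\grad|u|$ and the level-set gradient lemma) by citing Theorems 6.17 and 6.19 of Lieb--Loss, which is exactly the bookkeeping you describe.
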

\begin{proof}
Given $u \in H^1$, we have pointwise \cite[Thm 6.17]{LiebLoss01}
\begin{align*}
|\partial_j |u||^2 &= \begin{cases} \left|\mbox{Re} \Big[ \frac{\overline{u}}{|u|} \partial_j u \Big]\right|^2 & u \neq 0\\
0 &u = 0 \end{cases}
\end{align*}
Furthermore by \cite[Thm 6.19]{LiebLoss01} we have that a.e.
\[
\Big| \partial_j u \Big|^2 = \begin{cases} \Big| \frac{\overline{u}}{|u|} \partial_j u \Big|^2 & u \neq 0\\ 0 & u=0. \end{cases}
\]
It follows that a.e.
\[ 
|\grad u|^2 - |\grad |u||^2 =  \begin{cases} \Big|  \mbox{Im} \Big[ \frac{\overline{u}}{|u|} \grad u \Big] \Big|^2 & u \neq 0\\ 0 & u=0.
\end{cases}
\]
From here the result follows by integration.
\end{proof}
\begin{remark}
Pointwise, for $u \neq 0$, we have
\begin{align*}
\left|\im \left[ \frac{\overline{u}}{u} \grad u \right]\right| & \le \left|\frac{\overline{u}}{u} \grad u \right|\\
& =\left| \grad u \right|,
\end{align*}
and so it is natural to interpret $\im \left[ \frac{\overline{u}}{u} \grad u \right]$ as
\begin{equation}\label{eq:interpret}
\im \left[ \frac{\overline{u}}{u} \grad u \right] = \begin{cases} \im \left[ \frac{\overline{u}}{u} \grad u \right]  & u \neq 0 \mbox{ and } \grad u \neq 0\\ 0 & \grad u=0. \end{cases}
\end{equation}
As this is defined a.e. and equal to
\[
\begin{cases} \im \left[ \frac{\overline{u}}{|u|} \grad u \right] & u \neq 0\\ 0 & u=0, \end{cases}
\]
if we take \eqref{eq:interpret} as a definition, it is clear from the proof that the identity of Lemma \ref{lem:remainder} holds a.e. without need for integration.
\end{remark}

This general identity produces a useful inequality related to the setting of NLS.

\begin{lemma}\label{lem:momentum-inequality}
For $u \in H^1$ with $|x|u \in L^2$,
\[\left(\int |x \cdot \frac{\mathcal{P}[u]}{2}| dx\right)^2 + \left(\frac{\textbf{M}[u]}{2}\right)^2 \le \left(\int |x|^2 |u|^2 dx \right) \left( \norm{\grad u}_2^2 - \norm{ \grad |u|}_2^2 \right).\]
\end{lemma}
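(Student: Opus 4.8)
The plan is to reduce the claim to an abstract Cauchy--Schwarz inequality for a weighted vector field. First I would invoke Lemma~\ref{lem:remainder} to rewrite the second factor on the right-hand side as
\[
\norm{\grad u}_2^2 - \norm{\grad|u|}_2^2 = \int_{u\neq 0}\Big|\frac{\mathcal{P}[u]}{2|u|}\Big|^2\,dx.
\]
Since $\grad u = 0$ almost everywhere on $\{u=0\}$, the momentum density $\mathcal{P}[u] = 2\,\im[\overline u\,\grad u]$ vanishes a.e. there, so every integral in the statement is supported on $\{u\neq 0\}$. I would then introduce the finite measure $d\mu = |u|^2\,dx$ and the vector field $\textbf{q} = \mathcal{P}[u]/(2|u|^2)$, both defined on $\{u\neq 0\}$. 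With these, the linear term becomes $\int|\textbf{x}\cdot\textbf{q}|\,d\mu$, the angular term becomes $\textbf{M}[u]/2 = \int \textbf{q}\wedge\textbf{x}\,d\mu$, one has $\int|\textbf{x}|^2\,d\mu = V[u]$, and $\int|\textbf{q}|^2\,d\mu$ equals the Kato remainder above, so the lemma is equivalent to the abstract inequality
\[
\Big(\int|\textbf{x}\cdot\textbf{q}|\,d\mu\Big)^2 + \Big|\int\textbf{q}\wedge\textbf{x}\,d\mu\Big|^2 \le \Big(\int|\textbf{x}|^2\,d\mu\Big)\Big(\int|\textbf{q}|^2\,d\mu\Big).
\]

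The key idea for this abstract inequality is a pointwise orthogonal decomposition of $\textbf{q}$ relative to the radial direction $\hat{\textbf{x}} = \textbf{x}/|\textbf{x}|$. Writing $\textbf{q} = q_r\hat{\textbf{x}} + \textbf{q}_\perp$ with $q_r = \textbf{x}\cdot\textbf{q}/|\textbf{x}|$ and $\textbf{q}_\perp\cdot\textbf{x} = 0$, one has $|\textbf{q}|^2 = q_r^2 + |\textbf{q}_\perp|^2$. The point of this splitting is that the two terms on the left decouple onto orthogonal components of $\textbf{q}$: the linear term sees only $q_r$, since $|\textbf{x}\cdot\textbf{q}| = |\textbf{x}|\,|q_r|$, while the angular term sees only $\textbf{q}_\perp$, because $\textbf{x}\wedge\textbf{x}=0$ gives $\textbf{q}\wedge\textbf{x} = \textbf{q}_\perp\wedge\textbf{x}$, and the Lagrange identity together with $\textbf{q}_\perp\perp\textbf{x}$ gives $|\textbf{q}_\perp\wedge\textbf{x}| = |\textbf{x}|\,|\textbf{q}_\perp|$.

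With the decoupling in hand I would estimate each term by Cauchy--Schwarz against the common weight $|\textbf{x}|^2\,d\mu$. For the linear term,
\[
\Big(\int|\textbf{x}|\,|q_r|\,d\mu\Big)^2 \le \Big(\int|\textbf{x}|^2\,d\mu\Big)\Big(\int q_r^2\,d\mu\Big),
\]
and for the angular term I would first apply the vector-valued triangle inequality $\big|\int \textbf{q}_\perp\wedge\textbf{x}\,d\mu\big| \le \int|\textbf{x}|\,|\textbf{q}_\perp|\,d\mu$ and then Cauchy--Schwarz, obtaining the bound $\big(\int|\textbf{x}|^2\,d\mu\big)\big(\int|\textbf{q}_\perp|^2\,d\mu\big)$. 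Adding the two squared estimates and using $q_r^2 + |\textbf{q}_\perp|^2 = |\textbf{q}|^2$ reproduces the right-hand side exactly. I expect the only real subtlety to be the recognition that the radial/tangential decomposition is precisely what prevents cross terms between the linear and angular contributions; without it, a single Cauchy--Schwarz produces an error term and the sum of the two pieces would not close to the clean product. The remaining technical point, that $\mathcal{P}[u]$ vanishes on $\{u=0\}$ so that passing to $d\mu$ and dividing by $|u|^2$ is legitimate, is routine.
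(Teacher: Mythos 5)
Your proof is correct and follows essentially the same route as the paper's: a weighted Cauchy--Schwarz estimate on each of the two terms against the common weight $|x|^2|u|^2\,dx$, combined through the pointwise Pythagorean (Lagrange) identity $\left|\hat{\textbf{x}}\cdot\frac{\mathcal{P}[u]}{2|u|}\right|^2+\left|\hat{\textbf{x}}\wedge\frac{\mathcal{P}[u]}{2|u|}\right|^2=\left|\frac{\mathcal{P}[u]}{2|u|}\right|^2$, and closed by Lemma~\ref{lem:remainder}. Your explicit radial/tangential decomposition $\textbf{q}=q_r\hat{\textbf{x}}+\textbf{q}_\perp$ is just a more verbose phrasing of the paper's ``direct computation,'' so there is nothing substantive to distinguish the two arguments.
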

\begin{proof}
First,
\begin{align*}
\left(\int |x \cdot \frac{\mathcal{P}[u]}{2}| dx\right)^2 &= \left(\int |x||u| \left( \frac{x}{|x|} \cdot \frac{\mathcal{P}[u]}{2|u|} \right) dx\right)^2\\
& \le \left(\int |x|^2|u|^2 dx \right) \left(\int \left|\frac{x}{|x|} \cdot \frac{\mathcal{P}[u]}{2|u|} \right|^2 dx\right).
\end{align*}
Similarly,
\begin{align*}
\left(\int |x \wedge \frac{\mathcal{P}[u]}{2}| dx\right)^2 &\le \left(\int |x|^2|u|^2 dx \right) \left(\int \left|\frac{x}{|x|} \wedge \frac{\mathcal{P}[u]}{2|u|} \right|^2 dx \right).
\end{align*}
By a direct computation,
\[\left| \frac{x}{|x|} \cdot \frac{\mathcal{P}[u]}{2|u|} \right|^2 + \left| \frac{x}{|x|} \wedge \frac{\mathcal{P}[u]}{2|u|} \right|^2 = \left| \frac{\mathcal{P}[u]}{2|u|} \right|^2,\]
and the inequality follows by Lemma \ref{lem:remainder}.
\end{proof}
\begin{remark}\label{rem:kato-statement}
For $u_0$, this inequality can also be written as
\[
\frac{|V_t[u_0]|^2 + |2 \textbf{M}[u_0]|^2}{16 V[u_0]} \le \norm{\grad u_0}_2^2 - \norm{ \grad |u_0|}_2^2,
\]
and each of the energy conditions \eqref{eq:intro-energy-above}, \eqref{eq:bidirectional-energy-assumption}, or \eqref{eq:unidirectional-energy-assumption} imply
\[
\masskatoenergy{u_0} \le \massenergy{u_Q}.
\]
\end{remark}

\subsection{A Nonlinear Inequality Below the Ground State}
\par Our main results for NLS will require a nonlinear inequality which holds which holds when $|u|$ is below the ground state.
\par For this we will introduce the minimal energy function, $P:\mathbb{R}_{\ge 0} \rightarrow \mathbb{R}$ by
\[
P(x) = \frac{1}{2}x  - \frac{C_{GN}}{p+1} x^{\frac{n(p-1)}{4}},
\]
so that the Gagliardo-Nirenberg inequality implies
\[
\massenergy{u} \ge P\left(\masskineticenergy{u}\right).
\]
\par Kenig and Merle \cite{KenigMerle06} used a similar function to establish results for energy critical NLS, and Holmer and Roudenko \cite{HolmerRoudenko07} use such a function in establishing results for intercritical NLS below the ground state. By the Gagliardo-Nirenberg inequality and Kato's inequality, the minimal energy function also satisfies
\begin{equation}\label{eq:minimal-energy-kato}
\masskatoenergy{u} \le P\left(\masskatokineticenergy{u}\right),
\end{equation}
and this property was used in \cite{CzubakMillerRoudenko24} to obtain results for electromagnetic NLS.
\par A computation with \eqref{eq:soliton-identity1} and \eqref{eq:soliton-identity2} shows that $P$ is concave down on $[0,\infty)$, attaining a maximum of $\massenergy{u_Q}$ at $\masskineticenergy{u_Q}$.
\begin{lemma}\label{lem:nonlinear-ineq}
Define $P^*:\bigg[-\masskineticenergy{u_Q},\infty\bigg) \rightarrow \mathbb{R}$ by
\[
P^*(x) = E[u_Q] \norm{u_Q}_2^{2\frac{(1-s_c)}{s_c}} - P \left( x+\norm{\grad u_Q}_2^2 \norm{u_Q}_2^{2\frac{(1-s_c)}{s_c}} \right).
\]
Then we have
\begin{equation}\label{eq:nonlin-def-ineq}
 E[u_Q]\norm{u_Q}_2^{2\frac{(1-s_c)}{s_c}} - \masskatoenergy{u} \le P^*\left(\masskatokineticenergy{u} - \masskineticenergy{u_Q}\right).
 \end{equation}
\end{lemma}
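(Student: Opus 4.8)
The plan is to reduce \eqref{eq:nonlin-def-ineq} to a single analytic input — a Gagliardo--Nirenberg bound for the modulus $|u|$ — after which the statement becomes a one-line unwinding of the definition of $P^*$. The key observation is bookkeeping: by the definition of $P^*$, the right-hand side minus the left-hand side of \eqref{eq:nonlin-def-ineq} equals exactly $\masskatoenergy{u} - P\!\left(\masskatokineticenergy{u}\right)$, because substituting $x = \masskatokineticenergy{u} - \masskineticenergy{u_Q}$ cancels the shift in the argument of $P$. Hence \eqref{eq:nonlin-def-ineq} is equivalent to the lower bound $\masskatoenergy{u} \ge P\!\left(\masskatokineticenergy{u}\right)$, and I would prove the lemma by establishing this bound.

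First I would establish the lower bound. Since $u \in H^1$ implies $|u| \in H^1$ with $\norm{|u|}_2 = \norm{u}_2$ and $\norm{|u|}_{p+1} = \norm{u}_{p+1}$ (the pointwise gradient identities underlying Lemma \ref{lem:remainder}), applying the Gagliardo--Nirenberg inequality to $|u|$ gives
\[
\norm{u}_{p+1}^{p+1} \le C_{GN}\,\norm{\grad|u|}_2^{\frac{N(p-1)}{2}}\,\norm{u}_2^{2 - \frac{(N-2)(p-1)}{2}}.
\]
Inserting this into the definition of $E_K[u]$, multiplying through by the scale-invariant factor $\norm{u}_2^{2(1-s_c)/s_c}$, and collapsing the gradient and mass powers into the single variable $\masskatokineticenergy{u}$ (the same scaling that turns $\massenergy{u} \ge P(\masskineticenergy{u})$ into its defining form) yields
\[
\masskatoenergy{u} \ge P\!\left(\masskatokineticenergy{u}\right).
\]
It is essential to pass to $|u|$ only in the gradient term, so that the comparison degrades in the correct direction.

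With the lower bound in hand, I would invoke the structural fact recorded just above the lemma, namely that $P$ attains its maximum value $\massenergy{u_Q}$ at the point $\masskineticenergy{u_Q}$. Substituting $x = \masskatokineticenergy{u} - \masskineticenergy{u_Q}$ into the definition of $P^*$ cancels the shift by $\norm{\grad u_Q}_2^2\norm{u_Q}_2^{2(1-s_c)/s_c} = \masskineticenergy{u_Q}$ and gives
\[
P^*\!\left(\masskatokineticenergy{u} - \masskineticenergy{u_Q}\right) = E[u_Q]\norm{u_Q}_2^{2\frac{(1-s_c)}{s_c}} - P\!\left(\masskatokineticenergy{u}\right) \ge E[u_Q]\norm{u_Q}_2^{2\frac{(1-s_c)}{s_c}} - \masskatoenergy{u},
\]
which is exactly \eqref{eq:nonlin-def-ineq}. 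I would also record the domain check: because $\masskatokineticenergy{u} = \norm{\grad|u|}_2^2\,\norm{u}_2^{2(1-s_c)/s_c} \ge 0$, the argument $\masskatokineticenergy{u} - \masskineticenergy{u_Q}$ lies in $\left[-\masskineticenergy{u_Q}, \infty\right)$, so $P^*$ is evaluated within its stated domain.

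I do not anticipate a substantive obstacle, since the lemma is a bookkeeping consequence of the Gagliardo--Nirenberg bound for $|u|$ together with the definition of $P^*$. The only point demanding genuine care is the \emph{orientation} of the inequality: the minus sign in $P^*$ flips the direction, so the lemma requires the lower bound $\masskatoenergy{u} \ge P(\masskatokineticenergy{u})$ obtained by testing Gagliardo--Nirenberg on $|u|$ against the sharp constant $C_{GN}$. A comparison routed instead through Kato's inequality $\norm{\grad|u|}_2 \le \norm{\grad u}_2$, relating $E_K[u]$ to the full energy $E[u]$, would control the wrong quantity and reverse the conclusion, so I would be explicit that the relevant comparison is between $|u|$ and the ground state through the common optimizer of the Gagliardo--Nirenberg inequality.
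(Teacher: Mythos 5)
Your proof is correct and follows essentially the same route as the paper: the paper's one-line proof combines its inequality \eqref{eq:minimal-energy-kato} (Gagliardo--Nirenberg applied to $|u|$, with Kato's inequality serving only to guarantee $|u| \in H^1$) with the construction of $P^*$, which is precisely your reduction. Your insistence on the orientation of the inequality is well placed: as printed, \eqref{eq:minimal-energy-kato} reads $\masskatoenergy{u} \le P\left(\masskatokineticenergy{u}\right)$, which is a sign typo --- Gagliardo--Nirenberg applied to $|u|$ gives $\ge$, and that is the direction the lemma actually requires, exactly as your unwinding of $P^*$ shows.
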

\par This lemma follows immediately by \eqref{eq:minimal-energy-kato} and construction of $P^*$.
\begin{remark}
For the function $P^*$ defined in Lemma \ref{lem:nonlinear-ineq}, in practice we will only be concerned with the restriction $P^*: \mathbb{R}_{\ge 0} \rightarrow \mathbb{R}$. Here $P^*$ is increasing, and satisfies
\[
P^*(0)=\frac{d}{dx} P^* (0)=0.
\]
Furthermore, for this restriction, near $0$ $P^*$ grows like $x^2$, and at infinity $P^*$ grows like $x^{\frac{N(p-1)}{4}}$. Note for intuition, that in the case $s_c \le \frac{N}{4}$, we have $\frac{N(p-1)}{4} \le 2$, and a computation using the second derivative of $P^*(x)$ shows that for $P^*$ restricted to $\mathbb{R}_{\ge 0}$, the inequality \eqref{eq:nonlin-def-ineq} holds (but is weaker) if we replace $P^*(x)$ by a function $F(x)$, defined by
\[
F(x) = \frac{C_{GN}}{p+1} x^{\frac{N(p-1)}{4}}.
\]
\end{remark}

Making use of Lemma \ref{lem:nonlinear-ineq} and \cite[Theorem 3.7]{DuyckaertsRoudenko15}, we obtain the following sufficient condition for scattering.
\begin{corollary}\label{cor:scattering}
Given $u_0 \in H^1$ with
\begin{equation}\label{eq:scattering-kinetic}
\masskatokineticenergy{u_0} < \masskineticenergy{u_Q},
\end{equation}
and corresponding solution $u$ satisfying $u(0) = u_0$, defined on $[0,T^*)$ with $T^*$ being maximal,
if
\begin{equation}\label{eq:scattering-sup}
\sup_{t \in [0,T^*)} \masskatoenergy{u(t)} < \massenergy{u_Q},
\end{equation}
then $T^* = \infty$ and $u$ scatters forward in time in $H^1$.
\end{corollary}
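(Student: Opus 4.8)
The plan is to run a continuity (bootstrap) argument on the mass-Kato-kinetic energy $\masskatokineticenergy{u(t)}$: I will show that the uniform energy control \eqref{eq:scattering-sup} forces $\masskatokineticenergy{u(t)}$ to stay strictly below the ground-state value $\masskineticenergy{u_Q}$ for every $t \in [0,T^*)$, and then convert this into uniform control of the potential energy $\norm{u(t)}_{p+1}^{p+1}$, which is exactly the hypothesis of the scattering criterion \cite[Theorem 3.7]{DuyckaertsRoudenko15}.

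First I would quantify \eqref{eq:scattering-sup} by setting
\[
\delta \coloneqq \massenergy{u_Q} - \sup_{t \in [0,T^*)} \masskatoenergy{u(t)} > 0.
\]
By Lemma \ref{lem:nonlinear-ineq}, for every $t \in [0,T^*)$,
\[
\delta \le E[u_Q]\norm{u_Q}_2^{2\frac{(1-s_c)}{s_c}} - \masskatoenergy{u(t)} \le P^*\!\left(\masskatokineticenergy{u(t)} - \masskineticenergy{u_Q}\right).
\]
Since $P$ is strictly concave with unique maximum $\massenergy{u_Q}$ attained at $\masskineticenergy{u_Q}$, the function $P^*$ is nonnegative, vanishes only at $0$, is strictly decreasing on $[-\masskineticenergy{u_Q},0]$ and strictly increasing on $[0,\infty)$. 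Hence the superlevel set $\{P^*\ge\delta\}$ is the complement of an open interval $(a,b)$ with $a<0<b$, and the displayed bound forces, for every $t$, either $\masskatokineticenergy{u(t)} \le \masskineticenergy{u_Q}+a$ or $\masskatokineticenergy{u(t)} \ge \masskineticenergy{u_Q}+b$; in particular $\masskatokineticenergy{u(t)}$ never lies in the forbidden interval about $\masskineticenergy{u_Q}$ cut out by $a$ and $b$.

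Next I would invoke continuity. Since $u \in C\big([0,T^*);H^1\big)$ and, by Theorem \ref{thm:ske-continuous}, the map $w \mapsto \norm{\grad |w|}_2^2$ is continuous on $H^1$, while the mass $\norm{u(t)}_2$ is conserved, the function $t \mapsto \masskatokineticenergy{u(t)}$ is continuous on $[0,T^*)$. At $t=0$ the hypothesis \eqref{eq:scattering-kinetic} gives $\masskatokineticenergy{u_0} < \masskineticenergy{u_Q}$, and since $\masskineticenergy{u_Q}+b>\masskineticenergy{u_Q}$ the dichotomy above forces $\masskatokineticenergy{u_0} \le \masskineticenergy{u_Q}+a$. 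A continuous function cannot jump across the forbidden gap $(\masskineticenergy{u_Q}+a,\masskineticenergy{u_Q}+b)$, so $\masskatokineticenergy{u(t)} \le \masskineticenergy{u_Q}+a < \masskineticenergy{u_Q}$ for all $t \in [0,T^*)$; that is, the mass-Kato-kinetic energy stays uniformly and strictly below threshold.

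Finally, because \eqref{eq:scattering-sup} yields $\masskatoenergy{u(t)} \le \massenergy{u_Q}$ for each $t$, the equivalence \eqref{eq:intro-kinetic-below} of Remark \ref{rem:grad-comp} converts the uniform bound just obtained into
\[
\sup_{t \in [0,T^*)} \norm{u(t)}_{p+1}^{p+1}\norm{u(t)}_2^{2\frac{(1-s_c)}{s_c}} < \norm{u_Q}_{p+1}^{p+1}\norm{u_Q}_2^{2\frac{(1-s_c)}{s_c}},
\]
which is precisely the uniform control of the potential energy required by \cite[Theorem 3.7]{DuyckaertsRoudenko15}; applying that theorem gives $T^*=\infty$ and scattering forward in $H^1$. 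I expect the main obstacle to be the continuity step: a naive bootstrap would want to control the full kinetic energy $\norm{\grad u(t)}_2^2$, which may be large here, so the argument must instead be run at the level of Kato kinetic energy, whose continuity in $t$ is exactly what Theorem \ref{thm:ske-continuous} supplies, and correspondingly the scattering criterion must be the one phrased through the potential energy rather than the kinetic energy.
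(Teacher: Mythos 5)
Your proposal is correct and takes essentially the same route as the paper: a continuity/bootstrap argument powered by Lemma \ref{lem:nonlinear-ineq} (with the time-continuity of $t \mapsto \norm{\grad |u(t)|}_2^2$ supplied by Theorem \ref{thm:ske-continuous} together with $u \in C([0,T^*);H^1)$ and mass conservation) keeps the mass-Kato kinetic energy uniformly below the ground-state value, after which the potential-energy scattering criterion of \cite[Theorem 3.7]{DuyckaertsRoudenko15} yields $T^*=\infty$ and forward scattering. The one caveat is expository rather than mathematical: Remark \ref{rem:grad-comp} justifies the equivalence \eqref{eq:intro-kinetic-below} by pointing back to the proof of this very corollary, so to avoid circularity you should perform that last conversion directly from the sharp Gagliardo--Nirenberg inequality applied to $|u(t)|$ (equivalently, from the identity the paper displays combined with Lemma \ref{lem:nonlinear-ineq}), which your uniform gap $\masskatokineticenergy{u(t)} \le \masskineticenergy{u_Q}+a$ with $a<0$ turns into a uniform bound $\sup_t \norm{u(t)}_{p+1}^{p+1}\norm{u(t)}_2^{2\frac{(1-s_c)}{s_c}} \le C_{GN}\left(\masskineticenergy{u_Q}+a\right)^{\frac{N(p-1)}{4}} < \norm{u_Q}_{p+1}^{p+1}\norm{u_Q}_2^{2\frac{(1-s_c)}{s_c}}$, exactly as \cite[Theorem 3.7]{DuyckaertsRoudenko15} requires.
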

\begin{remark}
We do not need the full power of Lemma \ref{lem:nonlinear-ineq}. It will be apparent that control such as that which appears in \cite[Lemma 3.4]{KenigMerle06} for the energy critical case is sufficient. As such, this is essentially just a rephrasing of \cite[Theorem 3.7]{DuyckaertsRoudenko15} suited to our problem.
\end{remark}
\begin{proof}
If $u_0$ satisfies the hypothesis of Corollary \ref{cor:scattering}, then \eqref{eq:scattering-kinetic} and \eqref{eq:scattering-sup} imply that \eqref{eq:scattering-kinetic} holds for the full positive interval of existence, and $T^*= \infty$. With Lemma \ref{lem:nonlinear-ineq}, this implies
\begin{equation}\notag
\sup_{t \in [0,\infty)} P^* \left( \masskatokineticenergy{u_Q}- \masskatokineticenergy{u(t)} \right)> 0,
\end{equation}
and hence
\begin{equation}\notag
\sup_{t \in [0,\infty)} \masskatokineticenergy{u(t)} < \masskatokineticenergy{u_Q}.
\end{equation}
\par Using
\begin{align*}
&\norm{w}_{p+1}^{p+1} \norm{w}_2^{2 \frac{(1-s_c)}{s_c}} - \norm{u_Q}_{p+1}^{p+1} \norm{u_Q}_2^{2 \frac{(1-s_c)}{s_c}}\\
&= \frac{p+1}{2} \left( \masskatokineticenergy{w} - \masskineticenergy{u_Q} \right) + (p+1)\left(\massenergy{u_Q} - \masskatoenergy{w} \right)\\
& \ge \frac{p+1}{2} \left( \masskatokineticenergy{w} - \masskineticenergy{u_Q} \right),
\end{align*}
we can directly apply \cite[Theorem 3.7]{DuyckaertsRoudenko15}, and $u$ scatters forward in time in $H^1$.
\end{proof}

\section{Global Existence and Blow-Up}
In this section we prove the main results regarding global existence and blow-up for solutions to NLS.

\subsection{Proof of Theorem \ref{thm:main-unidirectional}}

\par Here we prove \ref{thm:main-unidirectional}, and establish results in one time direction.
\begin{proof}
\par In this proof we will assume \eqref{eq:unidirectional-lower-energy-assumption} without further comment. As noted in Remark \ref{rem:kato-statement}, if $u$ satisfies \eqref{eq:unidirectional-energy-assumption}, then $u$ also satisfies
\begin{equation}\label{eq:unidirectional-kato-statement}
\masskatoenergy{u} \le \massenergy{u_Q}.
\end{equation}
If \eqref{eq:unidirectional-energy-assumption} is strict, then \eqref{eq:unidirectional-kato-statement} is strict as well.
\par We begin with the case of global existence, and introduce a set of assumption which is stronger than what appears in the hypothesis of the theorem.
\begin{assumption}\label{ass:un-ex-end}
We say $w \in H^1$ satisfies assumption \ref{ass:un-ex-end} if $w$ satisfies (with $u$ or $u_0$ replaced with $w$ where necessary).
\begin{align}
& - \quad \quad \left[ E[w] - \frac{|V_t[w]|^2 + |2 \textbf{M}[w]|^2}{32 V[w]}\right] \norm{w}_2^{2\frac{(1-s_c)}{s_c}} <  E[w] \norm{w}_2^{2\frac{(1-s_c)}{s_c}} \label{eq:unidirectional-energy-assumption-strict},\\\notag\\
& - \quad \quad \eqref{eq:kinetic-existence-uni},\,\mbox{and} \notag\\\notag\\
& - \quad \quad V_t[w] > 0\label{eq:variance-existence-strict}.
\end{align}
\end{assumption}
\par We will establish the following with $u$ as the solution with initial data $u_0$.
\begin{enumerate}
\item If $u_0$ satisfies \eqref{eq:unidirectional-energy-assumption}, \eqref{eq:kinetic-existence-uni} and \eqref{eq:variance-existence-uni}, then there is some $\epsilon>0$ so that on the open interval $(0,\epsilon)$, $u$ satisfies assumption \ref{ass:un-ex-end}.
\item If $u(T)$ satisfies assumption \ref{ass:un-ex-end}, then there is some $\epsilon>0$ so that on the interval $[T,T+\epsilon)$, $u$ satisfies assumption \ref{ass:un-ex-end}.
\item If $u$ satisfies assumption \ref{ass:un-ex-end} on an open interval $(T_1,T_2)$, then $u$ can be extended to $(T_1,T_2]$, and $u(T_2)$ satisfies assumption \ref{ass:un-ex-end}.
\end{enumerate}
\par If (1), (2), and (3) hold, then global existence follows immediately by a bootstrap argument. It is sufficient to prove (1) and (3).\\
\par Proof of (1): Assume $u_0$ satisfies \eqref{eq:unidirectional-energy-assumption}, \eqref{eq:kinetic-existence-uni}, and \eqref{eq:variance-existence-uni}. Using \eqref{eq:variance-value}, since $u_0$ satisfies \eqref{eq:unidirectional-lower-energy-assumption}, \eqref{eq:unidirectional-energy-assumption}, and \eqref{eq:kinetic-existence-uni},
\begin{align}
V_{tt}[u_0] &= 8\norm{\grad u_0}_2^2 - \frac{4N(p-1)}{p+1} \norm{u_0}_{p+1}^{p+1}\label{eq:virial-unidirectional-existence-bs}\\
&> 8\left(\norm{\grad u_0}_2^2 - \norm{\grad |u_0|}_2^2 \right)\notag\\
&> 0 \notag.
\end{align}
In the sequel, we will make use of the intermediate inequality as well as the final inequality of \eqref{eq:virial-unidirectional-existence-bs}.
\par We compute
\begin{align}\label{eq:varcomp1}
&\partial_t \left[ \frac{ \left|V_t[u_0]\right|^2 + \left| 2 \textbf{M}[u_0] \right|^2}{ V[u_0]} \right]\\
& = \frac{V_t[u_0]}{V[u_0]} \left[ 2 V_{tt} [u_0] - \left( \frac{ \left| V_t [u_0]\right|^2 + \left|2 \textbf{M}[u_0] \right|^2}{V[u_0]}\right) \right]\notag.
\end{align}
Using \eqref{eq:virial-unidirectional-existence-bs} and Lemma \ref{lem:momentum-inequality},
\begin{equation}\label{eq:unidirectional-exist-initial-gap}
\left[ 2 V_{tt} [u_0] - \left( \frac{ \left| V_t [u_0]\right|^2 + \left|2 \textbf{M}[u_0] \right|^2}{V[u_0]}\right) \right] >0.
\end{equation}
\par Now, we can choose sufficiently small $\epsilon$ so that on $(0,\epsilon)$, using \eqref{eq:variance-existence-uni} and \eqref{eq:virial-unidirectional-existence-bs}, $u$ satisfies \eqref{eq:variance-existence-strict}. Using strict inequality in \eqref{eq:kinetic-existence-uni}, with $\epsilon$ chosen sufficiently small, $u$ also satisfies \eqref{eq:kinetic-existence-uni}. Using strict inequality in both \eqref{eq:unidirectional-exist-initial-gap} and \eqref{eq:virial-unidirectional-existence-bs}, for $\epsilon$ sufficiently small, $u$ further satisfies
\begin{equation}\label{eq:interval-gap-drop}
\partial_t \left[ \frac{ \left|V_t[u]\right|^2 + \left| 2 \textbf{M}[u] \right|^2}{V[u]} \right] >0.
\end{equation}
Together with \eqref{eq:unidirectional-energy-assumption}, we have that $u$ satisfies \eqref{eq:unidirectional-energy-assumption-strict}. Hence, $u$ satisfies assumption \ref{ass:un-ex-end} on $(0,\epsilon)$ and the proof of (1) is complete.\\
\par Proof of (3): Assume that $u$ satisfies assumption \ref{ass:un-ex-end} on $(T_1,T_2)$. Using \eqref{eq:kinetic-existence-uni}, we can extend $u$ to $(T_1,T_2]$. As in the proof of (1), on $(T_1,T_2)$ $u$ satisfies \eqref{eq:virial-unidirectional-existence-bs} and \eqref{eq:interval-gap-drop}. Since $u$ satisfies these as well as \eqref{eq:unidirectional-energy-assumption-strict} and \eqref{eq:variance-existence-strict} on $(T_1,T_2)$, $u(T_2)$ also satisfies \eqref{eq:unidirectional-energy-assumption-strict} and \eqref{eq:variance-existence-strict}. Since $u$ satisfies \eqref{eq:kinetic-existence-uni} on $(T_1,T_2)$, $u(T_2)$ satisfies
\[
\masskatokineticenergy{u(T_2)} \le \masskineticenergy{u_Q}.
\]
Furthermore, since \eqref{eq:unidirectional-energy-assumption-strict} implies the strict inequality $\masskatoenergy{u(T_2)} < \massenergy{u_Q}$, $u(T_2)$ satisfies \eqref{eq:kinetic-existence-uni}. Hence $u(T_2)$ satisfies assumption \ref{ass:un-ex-end} and the proof of (3) is complete.\\
\par This completes the proof of global existence. Scattering follows from \eqref{eq:unidirectional-energy-assumption-strict} at $u(\epsilon)$, \eqref{eq:interval-gap-drop} on $[\epsilon,\infty)$, and Corollary \ref{cor:scattering}.
\par We now move on to the case of blow-up. for ease in the proof, we begin by restating the hypothesis of the theorem for blow-up.
\begin{assumption}\label{ass:un-bl-end}
We say $w \in H^1$ satisfies assumption \ref{ass:un-bl-end} if $w$ satisfies all of \eqref{eq:unidirectional-energy-assumption}, \eqref{eq:kinetic-blow-up-uni}, \eqref{eq:variance-blow-up-uni}, and \eqref{eq:unidirectional-nonlinear-condition}, where for each condition $u$ or $u_0$ is replaced with $w$ as necessary necessary. In particular, \eqref{eq:unidirectional-nonlinear-condition} becomes
\begin{align}
    \frac{|2\textbf{M}[w]|^2}{32 V[w]}\norm{w}_2^{2\frac{(1-s_c)}{s_c}} > F \left( \left(E[w] - \frac{|V_t[w]|^2}{32 V[w]}\right)\norm{w}_2^{2\frac{(1-s_c)}{s_c}} - \massenergy{u_Q} \right).
    \end{align}
\end{assumption}
By another bootstrap argument, to establish the blow-up result it is sufficient to show the following.
\begin{enumerate}
\item If $u(T)$ satisfies assumption \ref{ass:un-bl-end}, then there is some $\epsilon>0$ so that on the interval $[T,T+\epsilon)$, $u$ satisfies assumption \ref{ass:un-bl-end}.
\item If $u$ is defined on an interval $(T_1,T_2]$, and $u$ satisfies assumption \ref{ass:un-bl-end} on the open interval $(T_1,T_2)$, then $u(T_2)$ satisfies assumption \ref{ass:un-bl-end}.
\item If there is some $T$ so for any $t$ in the interval of existence of $u$ with $t>T$, $u(t)$ satisfies assumption \ref{ass:un-bl-end}, then $u$ blows-up in finite positive time.
\end{enumerate}
\par To complete the theorem we must establish (1), (2), and (3).\\
\par Before we establish (1), (2), and (3), we perform some computations. Suppose some fixed $w$ satisfies assumption \ref{ass:un-bl-end}. Then 
\[
\frac{V_t[w]}{ V[w]} <0,
\]
and using \eqref{eq:kinetic-blow-up-uni},
\begin{align*}
&\left(32 E[w] - \frac{ \left| V_t [w]\right|^2 +|2\textbf{M}[w]|^2}{V[w]} - \frac{8(N(p-1) - 4)}{p+1} \norm{w}_{p+1}^{p+1}\right) \norm{w}_2^{2\frac{(1-s_c)}{s_c}}\\
& \le 32 \massenergy{u_Q} - \frac{8(N(p-1) - 4)}{p+1} \norm{w}_{p+1}^{p+1}\norm{w}_2^{2 \frac{(1-s_c)}{s_c}}\\
& < 0.
\end{align*}
By computation, as with \eqref{eq:varcomp1}, we have
\begin{equation}\label{eq:gap-deriv-uni-2}
\partial_t \left[ \frac{ \left|V_t[w]\right|^2 + \left| 2 \textbf{M}[w] \right|^2}{32 V[w]} \right] > 0.
\end{equation}
\par We continue by computing 
\begin{align*}
&\partial_t \left[ \frac{ \left|V_t[w]\right|^2 }{ V[w]} \right] \norm{w}_2^{2\frac{(1-s_c)}{s_c}}\\
& \quad = \frac{V_t[w]}{ V[w]} \left[ 32 E[w] - \frac{ \left| V_t [w]\right|^2 }{V[w]} - \frac{8(N(p-1) - 4)}{p+1} \norm{w}_{p+1}^{p+1} \right] \norm{w}_2^{2 \frac{(1-s_c)}{s_c}}.
\end{align*}
Note that \eqref{eq:unidirectional-energy-assumption} and \eqref{eq:kinetic-blow-up-uni} imply $\masskatoenergy{w} \le \massenergy{u_Q}$ and together with \eqref{eq:unidirectional-nonlinear-condition}, Lemma \ref{lem:momentum-inequality}, and Lemma \ref{lem:nonlinear-ineq}, we have
\begin{align*}
&\norm{w}_{p+1}^{p+1} \norm{w}_2^{2 \frac{(1-s_c)}{s_c}} - \norm{u_Q}_{p+1}^{p+1} \norm{u_Q}_2^{2 \frac{(1-s_c)}{s_c}}\\
&= \frac{p+1}{2} \left( \masskatokineticenergy{w} - \masskineticenergy{u_Q} \right) + (p+1)\left(\massenergy{u_Q} - \masskatoenergy{w} \right)\\
& \ge \frac{p+1}{2} \left( \masskatokineticenergy{w} - \masskineticenergy{u_Q} \right)\\
& > \frac{4(p+1)}{N(p-1) - 4} \left( \left(E[w] - \frac{|V_t[w]|^2}{32 V[w]}\right)\norm{w}_2^{2\frac{(1-s_c)}{s_c}} - E[u_Q]\norm{u_Q}_2^{2\frac{(1-s_c)}{s_c}} \right).
\end{align*}
Hence,
\begin{align*}
& \left[ 32 E[w] - \frac{ \left| V_t [w]\right|^2 }{V[w]} - \frac{8(N(p-1) - 4)}{p+1} \norm{w}_{p+1}^{p+1} \right] \norm{w}_2^{2 \frac{(1-s_c)}{s_c}}\\
& =  \left(32 E[w] - \frac{ \left| V_t [w]\right|^2 }{V[w]}\right) \norm{u}_2^{2 \frac{(1-s_c)}{s_c}} - 32\massenergy{u_Q}\\
& \quad - \frac{8(N(p-1) - 4)}{p+1}\left( \norm{w}_{p+1}^{p+1}\norm{w}_2^{2 \frac{(1-s_c)}{s_c}}- \norm{u_Q}_{p+1}^{p+1}\norm{u_Q}_2^{2 \frac{(1-s_c)}{s_c}}\right)  \\
& < 0,
\end{align*}
and so
\begin{equation}\label{eq:unidirectional-gap-drop-2}
\partial_t \left[ \frac{ \left|V_t[w]\right|^2 }{ V[w]} \right]>0.
\end{equation}
Finally, note
\begin{align*}
\partial_t \left[ \sqrt{V[w]}\right] &= \frac{V_t[w]}{2 \sqrt{V[w]}}\\
&= - \frac{1}{2} \left( \frac{|V_t[w]|^2}{V[w]} \right)^{\frac{1}{2}}
\end{align*}
\par Proof of (1): Application of \eqref{eq:gap-deriv-uni-2}, along with strict inequalities implies that (1) holds.
\par Proof of (2): Assume $u$ is defined on some interval $(T_1,T_2]$, and that assumption \ref{ass:un-bl-end} holds on $(T_1,T_2)$. We immediately have that $u(T_2)$ satisfies \eqref{eq:unidirectional-energy-assumption}. In fact, by \eqref{eq:gap-deriv-uni-2}, we have that on $(T_1,T_2]$, $u(t)$ satisfies \eqref{eq:unidirectional-energy-assumption} with strict inequality. This strict inequality implies that $u(T_2)$ satisfies \eqref{eq:kinetic-blow-up-uni}. Furthermore, \eqref{eq:unidirectional-gap-drop-2} and \eqref{eq:variance-blow-up-uni} on $(T_1,T_2)$ and the fact that $u$ does not blow-up at $T_2$ implies by continuity arguments that $V_t[u(T_2)] \neq 0$, and in particular, $u(T_2)$ satisfies \eqref{eq:variance-blow-up-uni}. To see that $u(T_2)$ satisfies \eqref{eq:unidirectional-nonlinear-condition}, note that $F$ is an increasing function, and \eqref{eq:variance-blow-up-uni} implies
\[
\partial_t \left[\frac{|2\textbf{M[u]}|^2}{V[u]}\right] >0,
\]
while \eqref{eq:unidirectional-gap-drop-2} implies
\[
\partial_t\left[  \left(E[w] - \frac{|V_t[w]|^2}{32 V[w]}\right)\norm{w}_2^{2\frac{(1-s_c)}{s_c}} - E[u_Q]\norm{u_Q}_2^{2\frac{(1-s_c)}{s_c}} \right] <0.
\]
Hence $u(T_2)$ satisfies \eqref{eq:unidirectional-nonlinear-condition}, and (2) holds.
\par Proof of (3): Without loss of generality, assume $u$ is defined at $t=0$, and satisfies assumption \ref{ass:un-bl-end} for all positive time in its interval of existence. Since
\[
\partial_t \left[ \frac{ \left|V_t[u](t)\right|^2}{32 V[u](t)} \right] > 0,
\]
and
\[
V_t[u](t)< 0,
\]
we have
\begin{align*}
& \partial_t \left[ \frac{V_t[u](t)}{\sqrt{V[u](t)}} \right] < 0,\\
&\frac{V_t[u](t)}{\sqrt{V[u](t)} }< \frac{V_t[u](0)}{\sqrt{V[u](0)}},
\end{align*}
and
\[
\partial_t \left[\sqrt{V[u](t)}\right] < \frac{V_t[u](0)}{2\sqrt{V[u](0)}}<0.
\]
The nonnegativity of $\sqrt{V[u](t)}$ implies that $u$ blows-up in finite positive time by a contradiction argument (or by the uncertainty principle). This completes the proof of blow-up.
\end{proof}

\subsection{Proof of Theorem \ref{thm:main-bidirectional}}
We will now prove \ref{thm:main-bidirectional}, establishing global existence and blow-up in both time directions.
\begin{proof}
\par We establish the result forward in time under conditions which are invariant under time reversal. Results in both time directions follow from time reversal. Throughout the proof we will assume \eqref{eq:unidirectional-lower-energy-assumption} without further comment.
\par We begin by showing global existence. Assume $u_0$ is initial data satisfying \eqref{eq:bidirectional-energy-assumption} and \eqref{eq:kinetic-existence-bi}.
\par If $V_t[u](0) \ge 0$, then the result follows by Theorem \ref{thm:main-unidirectional}.
\par If $V_t[u](0) <0$, then by strict inequality, there is a sufficiently small interval $[0,T)$ so that on $[0,T)$, $u$ satisfies $V_t[u](t)<0$, and hence \eqref{eq:bidirectional-energy-assumption}, and also satisfies \eqref{eq:kinetic-existence-bi}. Now suppose $[0,S)$ is any interval on which a solution satisfies \eqref{eq:bidirectional-energy-assumption} and \eqref{eq:kinetic-existence-bi}. By \eqref{eq:kinetic-existence-bi}, $u$ can be extended to $[0,S]$. Either there is some $t \in [0,S]$ such that $V_t[u](0) \ge 0$, or else $V_t[u](0) < 0$ on all of $[0,S)$ and
\begin{equation}\label{eq:bi-interval-drop}
\frac{|2 \textbf{M}[u_0]|^2}{32 V[u](T)} > \frac{|2 \textbf{M}[u_0]|^2}{32 V[u](0)},
\end{equation}
so \eqref{eq:bidirectional-energy-assumption} holds with strict inequality. This implies that \eqref{eq:kinetic-existence-bi} also holds for $u(T)$ and global existence follows by bootstrap. Consider $u$ on $[\epsilon,\infty)$, where \eqref{eq:bidirectional-energy-assumption} holds with strict inequality for $u(\epsilon)$. An application of \eqref{eq:bi-interval-drop} implies that for $t>\epsilon$,
\begin{equation}\notag
\masskatoenergy{u(t)} < \masskatoenergy{u(\epsilon)},
\end{equation}
which, applying Corollary \ref{cor:scattering}, shows that $u$ scatters forward in time.\\
\par We now prove the blow-up result.
\par We assume $u_0$ satisfies \eqref{eq:bidirectional-energy-assumption}, and \eqref{eq:kinetic-blow-up-bi}, and we will take $\lambda>1$ to be fixed for the remainder of the proof. If $V_t[u_0] < 0$ and $u_0$ satisfies \eqref{eq:bidirectional-nonlinear-condition-2}, then the proof is complete by application of Theorem \ref{thm:main-unidirectional}. If $V_t[u_0] =0$ and $u_0$ satisfies \eqref{eq:bidirectional-nonlinear-condition-1}, then \eqref{eq:bidirectional-nonlinear-condition-1} implies that $V_{tt}[u_0] < 0$, and by strict inequalities of the conditions there is some small time where $u(t)$ satisfies the blow-up conditions of Theorem \ref{thm:main-unidirectional}. Finally, assume that $V_t[u_0]>0$ and that $u_0$ satisfies \eqref{eq:bidirectional-nonlinear-condition-2}. For the remainder of the proof, we will always assume $u$ is a solution such that $u_0$ satisfies \eqref{eq:bidirectional-energy-assumption}, \eqref{eq:kinetic-blow-up-bi}, and \eqref{eq:bidirectional-nonlinear-condition-2} with $V_t[u_0]>0$.
\par We consider the following time dependent assumption.
\begin{assumption}\label{ass:bi-bootstrap}
Given $u(t)$ for some $t \in [0,(\lambda-1)\frac{V[u_0]}{V_t[u_0]})$, we say that $u(t)$ satisfies assumption \ref{ass:bi-bootstrap} if $u(t)$ satisfies
\begin{align}
& - \quad \quad \eqref{eq:kinetic-blow-up-bi}\notag\\\notag\\
& - \quad \quad V[u](t) \le V[u_0] + t V_t[u_0],\label{eq:bi-bootstrap-bl-2}\\\notag\\
& - \quad \quad V_t[u](t) \le V_t[u_0] - t \frac{|V_t[u_0]|^2}{(\lambda-1)V[u_0]},\label{eq:bi-bootstrap-bl-3}
\end{align}
\end{assumption}
\par This assumption holds trivially for $u_0$. Notice that if $u\left( (\lambda - 1) \frac{V[u_0]}{V_t[u_0]}\right)$ satisfies Assumption \ref{ass:bi-bootstrap}, then $u\left( (\lambda - 1) \frac{V[u_0]}{V_t[u_0]}\right)$ satisfies \eqref{eq:bidirectional-energy-assumption}, \eqref{eq:kinetic-blow-up-bi}, \eqref{eq:bidirectional-nonlinear-condition-1}, and $V_t\left[ u\left( (\lambda - 1) \frac{V[u_0]}{V_t[u_0]}\right) \right] =0$, which we have already demonstrated imply blow-up. So in order to establish blow-up for $u$, it is sufficient to show that the following hold.
\begin{enumerate}
\item If, for some $0 \le T < (\lambda - 1) \frac{V[u_0]}{V_t[u_0]}$, $u(T)$ satisfies assumption \ref{ass:bi-bootstrap}, then there is some $\epsilon>0$ so that $u(t)$ satisfies \ref{ass:bi-bootstrap} on $[T,T+\epsilon)$.
\item If, for some $0<S \le (\lambda-1)\frac{|V_t[u_0]|^2}{V[u_0]}$, $u$ is defined on $[0,S]$ and satisfies assumption \ref{ass:bi-bootstrap} on $[0,S)$, then $u(S)$ satisfies assumption \ref{ass:bi-bootstrap}.
\end{enumerate}
\par For (2), the only property which needs to be shown is \eqref{eq:kinetic-blow-up-bi}, which follows from \eqref{eq:bi-bootstrap-bl-2} and strict inequality
\begin{equation}\notag
\left[ E[u_0] - \frac{|2 \textbf{M}[u_0]|^2}{32 \lambda V[u_0]}\right] \norm{u_0}_2^{2\frac{(1-s_c)}{s_c}} < E[u_Q]\norm{u_Q}_2^{2\frac{(1-s_c)}{s_c}}.
\end{equation}
Assume $u(T)$ satisfies Assumption \ref{ass:bi-bootstrap} at some $0 \le T < (\lambda - 1) \frac{V[u_0]}{V_t[u_0]}$. Then $V[u](T)< \lambda V[u_0]$, which, with \eqref{eq:bidirectional-nonlinear-condition-2}, implies 
\begin{align*}
\frac{|2\textbf{M}[u_0]|^2}{32 V[u](t)}\norm{u_0}_2^{2\frac{(1-s_c)}{s_c}} &\ge \frac{|2\textbf{M}[u_0]|^2}{32 \lambda V[u_0]}\norm{u_0}_2^{2\frac{(1-s_c)}{s_c}}\\
&> G \left(E[u_0]\norm{u_0}_2^{2\frac{(1-s_c)}{s_c}} - E[u_Q]\norm{u_Q}_2^{2\frac{(1-s_c)}{s_c}}, \frac{|V_t[0]|^2}{(\lambda-1) V[0]}  \norm{u_0}_2^{2\frac{(1-s_c)}{s_c}}\right),
\end{align*}
and hence, the virial identity and use of Lemma \ref{lem:nonlinear-ineq} as in the proof of Theorem \ref{thm:main-unidirectional},
\begin{equation}\notag
V_{tt} < - \frac{|V_t[u_0]|^2}{(\lambda-1)V[u_0]}.
\end{equation}
This is sufficient to imply we can choose $\epsilon>0$ so that $u$ satisfies Assumption \ref{ass:bi-bootstrap} on $[T,T+\epsilon)$. Hence (1) holds and $u$ blows-up in finite time.
\end{proof}
\nocite{*}

\bibliographystyle{plain}
\bibliography{bibliography}

\end{document}